\documentclass{siamart220329}
\usepackage{mathrsfs}

\usepackage{braket,amsfonts}

\usepackage{array}

\usepackage[caption=false]{subfig}
\captionsetup[subtable]{position=bottom}
\captionsetup[table]{position=bottom}

\usepackage{pgfplots}

\newsiamthm{claim}{Claim}
\newsiamremark{remark}{Remark}
\newsiamremark{hypothesis}{Hypothesis}
\crefname{hypothesis}{Hypothesis}{Hypotheses}

\usepackage{algorithmic}

\usepackage{graphicx,epstopdf}

\Crefname{ALC@unique}{Line}{Lines}

\usepackage{amsopn}

\usepackage{xspace}
\usepackage{bold-extra}
\usepackage[most]{tcolorbox}

\colorlet{texcscolor}{blue!50!black}
\colorlet{texemcolor}{red!70!black}
\colorlet{texpreamble}{red!70!black}
\colorlet{codebackground}{black!25!white!25}


\lstdefinestyle{siamlatex}{%
  style=tcblatex,
  texcsstyle=*\color{texcscolor},
  texcsstyle=[2]\color{texemcolor},
  keywordstyle=[2]\color{texemcolor},
  moretexcs={cref,Cref,maketitle,mathcal,text,headers,email,url},
}

\tcbset{%
  colframe=black!75!white!75,
  coltitle=white,
  colback=codebackground, 
  colbacklower=white, 
  fonttitle=\bfseries,
  arc=0pt,outer arc=0pt,
  top=1pt,bottom=1pt,left=1mm,right=1mm,middle=1mm,boxsep=1mm,
  leftrule=0.3mm,rightrule=0.3mm,toprule=0.3mm,bottomrule=0.3mm,
  listing options={style=siamlatex}
}

\newtcblisting[use counter=example]{example}[2][]{%
  title={Example~\thetcbcounter: #2},#1}

\newtcbinputlisting[use counter=example]{\examplefile}[3][]{%
  title={Example~\thetcbcounter: #2},listing file={#3},#1}

\DeclareTotalTCBox{\code}{ v O{} }
{ 
  fontupper=\ttfamily\color{black},
  nobeforeafter,
  tcbox raise base,
  colback=codebackground,colframe=white,
  top=0pt,bottom=0pt,left=0mm,right=0mm,
  leftrule=0pt,rightrule=0pt,toprule=0mm,bottomrule=0mm,
  boxsep=0.5mm,
  #2}{#1}

\patchcmd\newpage{\vfil}{}{}{}
\flushbottom

\def\cB{\mathcal{B}}
\def\sE{\mathscr{E}}
\def\sF{\mathscr{F}}

\def\cJ{\mathcal{J}}
\def\sK{\mathscr{K}}
\def\fm{\mathfrak{m}}
\def\cM{\mathcal{M}}
\def\NN{\mathbb{N}}
\def\fp{\mathfrak{p}}

\def\fq{\mathfrak{q}}
\def\sQ{\mathscr{Q}}

\def\RR{\mathbb{R}}
\def\cR{\mathcal{R}}
\def\cU{\mathcal{U}}
\def\ZZ{\mathbb{Z}}

\DeclareMathOperator{\reg}{reg}
\DeclareMathOperator{\rank}{rank}

\title{A Field-Theoretic View of Unlabeled Sensing}






\author{Hao Liang \, \,  Jingyu Lu
\, \,  Manolis C. Tsakiris \, \,  Lihong Zhi \, \thanks{All authors are affiliated with the Key Laboratory of Mathematics Mechanization, Academy of Mathematics and Systems Science, Chinese Academy of Sciences, Beijing, 100190, China. Emails: \email{lianghao2020@amss.ac.cn}, \email{lujingyu@amss.ac.cn}, \email{manolis@amss.ac.cn}, \email{lzhi@mmrc.iss.ac.cn}.}}

\begin{document}

\maketitle
\begin{abstract}
Unlabeled sensing is the problem of solving a linear system of equations, where the right-hand-side vector is known only up to a permutation. In this work, we study fields of rational functions related to symmetric polynomials and their images under a linear projection of the variables; as a consequence, we establish that the solution to an $n$-dimensional unlabeled sensing problem with generic data can be obtained as the unique solution to a system of $n+1$ polynomial equations of degrees $1, \, 2, \, \dots, n+1$ in $n$ unknowns. Besides the new theoretical insights, this development offers the potential for scaling up algebraic unlabeled sensing algorithms.
\end{abstract}

\section{Introduction} \label{section:Introduction}

\subsection{Unlabeled Sensing}
	
In \emph{unlabeled sensing} \cite{Unnikrishnan-Allerton2015, unnikrishnan2018unlabeled} one is given a tall full-rank matrix $A^* \in \RR^{m \times n}$ and a vector $y^* \in \RR^m$, such that for a permutation $\pi$ of the coordinates of $\RR^m$ the linear system of equations $A^* x = \pi(y^*)$ has a solution $\xi^*$; the problem then is to find $\xi^*$ from $A^*$ and $y^*$. The main theorem of unlabeled sensing asserts that this is a well-defined question when $A$ is generic and $m \ge 2n$. This result has been generalized beyond permutations to arbitrary linear transformations \cite{TSAKIRIS2023210,Peng-ACHA-21}, as well as beyond linear spaces to unions of linear spaces \cite{Peng-ACHA-21} and to spaces of bounded-rank matrices \cite{yao2021, UPCA-JMLR, Tsakiris-ISIT23}.

Unlabeled sensing is an extremely challenging computational problem, known to be NP-hard \cite{Pananjady-TIT18,Hsu-NIPS17}, with brute-force \cite{Elhami-ICASSP17} or globally optimal approaches being tractable only for small dimensions; see \cite{peng_linear_2020} for a brief account. Nevertheless, unlabeled sensing has a wealth of potential applications from biology \cite{Abid-Allerton2018,ma2021bacterial} and neuroscience \cite{nejatbakhsh2021neuron} to digital communications \cite{Song-ISIT18}, data mining \cite{Slawski-JoS19,Slawski-JMLR2020,zhang2021benefits} and computer vision \cite{Tsakiris-ICML2019,li2023shuffled}.

\subsection{Motivation} 
In this paper we are concerned with algebraic aspects of unlabeled sensing \cite{Song-ISIT18,Tsakiris-TIT2020,melanova2022recovery}, for which we now set the context. Let 
\begin{equation*}
\RR[y_1,\dots,y_m]=:\RR[y], \, \, \, \RR[x_1,\dots,x_n]=:\RR[x]
\end{equation*}
be polynomial rings in $m$ and $n$ variables respectively over the real numbers $\RR$, and let
\begin{equation*}
    p_\ell = \sum_{i \in [m]} y_i^\ell \in \RR[y]
\end{equation*}
be the $\ell$-th power sum of the $y_i$'s; here and in the sequel $[t]$ denotes the set $\{1,2,\dots,t\}$, whenever $t$ is a positive integer. In \cite{Song-ISIT18} it was observed that $\xi^*$ is a root of the polynomial
\begin{equation*}
    q_\ell = p_\ell(A^* x)-p_\ell(y^*) \in \RR[x]
\end{equation*}
for any $\ell \in \NN$; here and in the sequel $x$ is the column vector containing $x_1,\dots,x_n$ in its entries. With $A^*$ generic, it was proved in \cite{Tsakiris-TIT2020} that the square system 
\begin{equation*}
    \sQ_n: \, q_1(x)=\cdots=q_n(x)=0
\end{equation*}
is zero-dimensional and thus has at most $n!$ solutions. An algorithm was also developed, which involved solving the square polynomial system for all of its roots via off-the-shelf solvers, isolating a root by a suitable criterion, and then using an expectation-maximization procedure to refine that root. An attractive feature of this algorithm is that it has linear complexity in $m$, while it has been empirically observed to be robust to low levels of noise: for SNR=$40$dB, $m=1000$ and $n=4$, the algorithm took $25$ milliseconds on a standard PC to produce a solution with a relative error of $0.4\%$ with respect to the ground truth. On the other hand, this algorithm is not scalable with respect to $n$: as $n$ increases, one would not even be able to store efficiently the $n!$ solutions of the square polynomial system, let alone solve it; indeed, in \cite{Tsakiris-TIT2020} it was possible to report results only for $n \le 6$.

\subsection{Contributions}
Our object of study in this paper is the overdetermined system of $n+1$ polynomial equations in $n$ unknowns 
\begin{equation*}
    \sQ_{n+1}: \, q_1(x) = \cdots = q_{n+1}(x)=0.
\end{equation*}
Our main result reads:

\begin{theorem} \label{thm:main}
Suppose that $A^* \in \RR^{m \times n}$ and $\xi^* \in \RR^{n \times 1}$ are generic, let $\pi$ be any permutation of the coordinates of $\RR^{m \times 1}$,
and set $y^* = \pi(A^* \xi^*) \in \RR^{m \times 1}$. Then $\xi^*$ is the unique complex solution of the polynomial system $\sQ_{n+1}$.
\end{theorem}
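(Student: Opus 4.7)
The plan is field-theoretic. Set $L := \mathbb{C}(x_1, \ldots, x_n)$, and for $1 \le \ell \le m$ define
\[
K_\ell \;:=\; \mathbb{C}\bigl(p_1(A^*x), \ldots, p_\ell(A^*x)\bigr) \;=\; \mathbb{C}\bigl(e_1(A^*x), \ldots, e_\ell(A^*x)\bigr) \;\subseteq\; L,
\]
where $e_i$ denotes the $i$-th elementary symmetric polynomial and the second equality follows from Newton's identities. Any complex solution $x$ of $\sQ_{n+1}$ satisfies $\Phi_{n+1}(x) = \Phi_{n+1}(\xi^*)$ for the map $\Phi_{n+1}(x) := (p_1(A^*x), \ldots, p_{n+1}(A^*x))$; hence \Cref{thm:main} is equivalent to showing that $\Phi_{n+1}$ is generically injective on $\mathbb{C}^n$, that is, $K_{n+1} = L$.

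The first reduction is to note that $K_m = L$, which is the main theorem of unlabeled sensing applied over $\mathbb{C}$: two inputs with identical power sums $p_1, \ldots, p_m$ produce permutation-equivalent vectors $A^*x$ and $A^*\xi^*$, forcing $x = \xi^*$ for generic tall $A^*$. Together with the zero-dimensionality of $\sQ_n$ established in \cite{Tsakiris-TIT2020}, this yields a finite extension $L/K_n$ of some degree $d \le n!$, whose generic fiber is $\xi^* = \xi^{(1)}, \ldots, \xi^{(d)}$. Inserting $K_{n+1}$ into the tower $K_n \subseteq K_{n+1} \subseteq L$, one has $K_{n+1} = L$ if and only if $[K_{n+1}:K_n] = d$, i.e., $p_{n+1}(A^*x)$ is a primitive element of $L/K_n$. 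The $(n+1)$-st Newton identity
\[
p_{n+1} \;=\; e_1 p_n - e_2 p_{n-1} + \cdots + (-1)^{n-1} e_n p_1 + (-1)^n (n+1)\, e_{n+1},
\]
combined with the fact (from Newton's identities applied iteratively to $q_1,\dots,q_n$) that $e_\ell(A^*\xi^{(j)}) = e_\ell(A^*\xi^*)$ for $\ell \le n$ and every $j$, reduces $q_{n+1}$ on $V(\sQ_n)$ to a nonzero scalar multiple of $e_{n+1}(A^*x) - e_{n+1}(A^*\xi^*)$. Primitivity thus boils down to showing that $e_{n+1}(A^*x)$ separates the $d$ points of $V(\sQ_n)$.

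This separation claim is the principal obstacle. Introducing the ``discriminant''
\[
\Delta(A^*, \xi^*) \;:=\; \prod_{1 \le i < j \le d}\bigl(e_{n+1}(A^*\xi^{(i)}) - e_{n+1}(A^*\xi^{(j)})\bigr)^2,
\]
which is symmetric in $\xi^{(1)},\dots,\xi^{(d)}$ and hence, by the fundamental theorem of symmetric functions, a rational function of $(A^*, \xi^*)$, it suffices to prove $\Delta \not\equiv 0$. Two routes appear promising. The first is Galois-theoretic: pass to the Galois closure $M/K_n$ of $L/K_n$, identify its group $G$, and show that no nontrivial element of $G$ can fix $e_{n+1}(A^*x)$ while stabilizing $L$ setwise; this pins $[K_n(e_{n+1}(A^*x)):K_n] = [L:K_n]$ and delivers primitivity uniformly. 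The second is a specialization route: exhibit an explicit $(A^*, \xi^*)$ for which the $d$ points of $V(\sQ_n)$ can be enumerated and $\Delta \ne 0$ verified directly, then conclude by openness/semicontinuity over the parameter space. The Galois-theoretic path is more conceptual but requires a precise description of the monodromy of the generic fiber of $\Phi_n$; the specialization path is computationally heavier but robust. Either way, the decisive ingredient is that $e_{n+1}(A^*x)$---being, via degeneration from the full $K_m$, ``the next informative'' symmetric polynomial after $e_1,\dots,e_n$---is not trapped in any proper intermediate subfield.
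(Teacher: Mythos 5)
Your frame is the right one --- reduce the theorem to a field equality, and reduce that, via Newton's identities, to the claim that $p_{n+1}(A^*x)$ (equivalently $e_{n+1}(A^*x)$ modulo $K_n$) is a primitive element of $L/K_n$, i.e.\ separates the points of the generic fiber of $\Phi_n$ --- but the proposal stops exactly at the decisive step. You introduce the discriminant $\Delta$, state that its nonvanishing is ``the principal obstacle,'' and then only name two routes that ``appear promising'' (a monodromy/Galois computation for $\Phi_n$, or an explicit certified specialization plus semicontinuity) without carrying out either. Neither is routine: pinning down the monodromy of the generic fiber of $\Phi_n$, or exhibiting a concrete $(A^*,\xi^*)$ whose fiber of size $d$ can be enumerated with distinct $e_{n+1}$-values for every $n$, is essentially the whole content of the theorem, so as written this is a plan, not a proof. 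The paper closes precisely this gap with Macaulay resultants: $\rho(A,r_1,\dots,r_{n+1})$ has degree $n!$ in $r_{n+1}$ with leading coefficient $\cR(p_1(Ax),\dots,p_n(Ax))^{n+1}$ (Lemma \ref{lem:coefficient}), vanishes at $r_i=p_i(Ax)$ (Lemma \ref{lem:rho-vanishing}), and is irreducible (Lemmas \ref{lem:irreducible(n+1)} and \ref{lem:irreducible(m)}), hence is, up to a unit, the minimal polynomial of $p_{n+1}(Ax)$ over $\sF_n$ (Lemma \ref{lem:minimal polynomial}), giving $[\sF_{n+1}:\sF_n]=n!=[\sF_m:\sF_n]$. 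The irreducibility is first proved for $m=n+1$, where $\sF_{n+1}=\sF_m$ is automatic and a dimension count on the determinantal variety of rank-deficient matrices rules out factors in $k[A]$, and is then lifted to general $m$ by specializing the rows below the $(n+1)$-st to zero; an argument of comparable substance is what your separation claim still needs.

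Two further points of divergence are worth flagging. First, your opening reduction $K_m=L$ is asserted by invoking ``the main theorem of unlabeled sensing over $\mathbb{C}$''; that theorem concerns real data and requires $m\ge 2n$, so you would either restrict the result to $m\ge 2n$ or need a complex, generic-signal version as an external input, whereas the paper proves the corresponding equality $\sF_m=\sE$ (Proposition \ref{prp:Fm=E}) from scratch, for every $m\ge n+1$, via freeness of $T$ over the symmetric subalgebra and an Artinian-ring argument, with no appeal to prior unlabeled-sensing results. Second, the paper keeps $A$ as a matrix of indeterminates throughout and only specializes to $(A^*,\xi^*)$ at the very end, through the explicit rational identities $x_i=f_i/g_i$ in $A$ and $p_1(Ax),\dots,p_{n+1}(Ax)$; this is what turns ``generic'' into a concrete Zariski-open condition (nonvanishing of the $g_i$) and yields uniqueness of the \emph{complex} solution for generic \emph{real} data. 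In your fixed-$A^*$ formulation, each genericity claim (the degree $d$, the fiber cardinality, $\Delta\neq 0$) must likewise be tied to explicit polynomial conditions on $(A^*,\xi^*)$, which the sketch does not do.
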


Theorem \ref{thm:main} settles an important open question in the theory of unlabeled sensing. Indeed, that $\sQ_{n+1}$ has a unique solution for generic data was already experimentally observed in \cite{Song-ISIT18}, and a more general unique recovery conjecture was formulated in \cite{melanova2022recovery} (Conjecture 6). But Theorem \ref{thm:main} also has significant implications for unlabeled sensing algorithms: no matter which method is used to obtain a root of $\sQ_{n+1}$, Theorem \ref{thm:main} guarantees that this root is $\xi^*$; contrast this to the algorithm of \cite{Tsakiris-TIT2020} which relied on filtering all $n!$ solutions of $\sQ_n$. Indeed, in \cite{liang2024unlabeled}, for which the present manuscript partially serves as a rigorous theoretical foundation, we have proposed an algorithm for obtaining the unique solution of $\sQ_{n+1}$ via rank-$1$ moment matrix completion, and reported encouraging results.

The proof of Theorem \ref{thm:main} relies on a careful analysis of a certain field of rational functions associated to the polynomials $p_\ell$ after a linear projection of the variables $x$ has been applied, which is interesting on its own right (note that without the linear projection, the study of the field generated by the $p_\ell$'s for fixed given values of $\ell$ and the question of when this coincides with the field of symmetric rational functions on $x$, is an old and well-known topic in the literature, e.g. see \cite{Kakeya1927, Nakamura1927, Foulkes1956,Dvornicich&Zannier2009}). Let $A=(a_{ij})$ be an $m \times n$ matrix of variables so that all $a_{ij}$'s and $x_j$'s are jointly algebraically independent over $\RR$. Denote by $\sE = \RR(A,x)$ the field of rational functions in the variables $A,x$ with coefficients in $\RR$ and $\mathscr{F}_{n+1} = \RR(A, p_1(Ax),\dots,p_{n+1}(Ax))$ the subfield of $\sE$ consisting of the rational functions in $A, p_1(Ax),\dots,p_{n+1}(Ax)$ with coefficients in $\RR$. We prove:

\begin{theorem} \label{thm:fields}
The fields of rational functions $\sF_{n+1}$ and $\sE$ coincide.
\end{theorem}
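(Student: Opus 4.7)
The plan is to exploit the tower $\sF_n \subseteq \sF_{n+1} \subseteq \sE$ and to prove that $[\sE:\sF_n]=n!$ while the single element $p_{n+1}(Ax)\in\sE$ already generates $\sE$ over $\sF_n$; this forces $\sF_{n+1}=\sF_n(p_{n+1}(Ax))=\sE$.

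First, I would establish $[\sE:\sF_n]=n!$ by a regular-sequence argument. Writing $t_\ell:=p_\ell(Ax)\in\sF_n$, the $t_\ell$ are algebraically independent over $\RR(A)$ because the Jacobian $[\partial p_\ell(Ax)/\partial x_j]$ has full rank generically. The polynomials $p_\ell(Ax)-t_\ell\in\sF_n[x_1,\dots,x_n]$ have $x$-degrees $1,2,\dots,n$ and---since $A$ is a matrix of indeterminates---their leading forms $p_\ell(Ax)$ have empty common zero in $\PP^{n-1}$ by the standard dimension count ($n$ hypersurfaces of positive degrees in $\PP^{n-1}$ generically meet in the empty scheme). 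Hence they form a regular sequence, giving a zero-dimensional complete intersection of length $n!$ by Bezout, reduced by genericity; $\sE$ arises as one of its residue fields, so $[\sE:\sF_n]=n!$.

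Since the tower is finite and separable, $\sF_n(p_{n+1}(Ax))=\sE$ is equivalent to the $n!$ Galois conjugates $p_{n+1}(Ax^{(i)})\in\overline{\sF_n}$ of $p_{n+1}(Ax)$---indexed by the $n!$ roots $x^{(1)},\dots,x^{(n!)}$ of $\sQ_n$ over $\overline{\sF_n}$---being pairwise distinct, i.e., the separation discriminant
\begin{equation*}
\Delta \;:=\; \prod_{1\le i<j\le n!}\bigl(p_{n+1}(Ax^{(i)})-p_{n+1}(Ax^{(j)})\bigr)^{2}\;\in\;\sF_n
\end{equation*}
is nonzero. Because $\Delta$ is a rational function of $A$ and $t_1,\dots,t_n$, its nonvanishing at any single specialization $(A^*,x^*)$ propagates to the generic point by upper semicontinuity.

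The main obstacle is producing such a specialization. I would take $A^*$ to be the $(n{+}1)\times n$ matrix whose top $n$ rows form $I_n$ and whose last row is $e^{\top}$ (padded with zero rows if $m>n+1$), for a generic $e\in\RR^n$. Then $p_\ell(A^*x)=p_\ell(x)+(e^{\top}x)^\ell$, so introducing $y_j=x_j$ for $j\le n$ and $y_{n+1}=e^{\top}x$ converts $\sQ_n$ at $A^*$ into the system of $n$ power-sum equations $p_\ell(y)=c_\ell$ on augmented $(n{+}1)$-tuples $y$, coupled with the affine constraint $y_{n+1}=e^{\top}(y_1,\dots,y_n)$. Bezout gives $n!$ solutions on the $S_{n+1}$-invariant one-dimensional variety cut out by the power-sum equations; for generic $e$, no two distinct orderings of any single unordered multiset can simultaneously satisfy the linear constraint (such a coincidence forces a codimension-one linear condition on $e$), so the $n!$ solutions have pairwise distinct unordered multisets, and then Newton's identities on $(n{+}1)$-multisets---since the first $n$ power sums are common to all solutions---force pairwise distinct values of $p_{n+1}$. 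This yields $\Delta(A^*,x^*)\ne 0$ and completes the proof.
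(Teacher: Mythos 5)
Your overall strategy is genuinely different from the paper's for the key step: where the paper shows $[\sF_{n+1}:\sF_n]=n!$ by proving that the Macaulay resultant $\rho(A,r_1,\dots,r_{n+1})$ is irreducible in $k[A,r]$ (first for $m=n+1$ via a determinantal-variety dimension argument, then for general $m$ by specializing rows to zero), you reduce the statement to the pairwise distinctness of the $n!$ conjugates of $p_{n+1}(Ax)$ over $\sF_n$ and try to certify this at a single explicit specialization. That reduction is legitimate (in characteristic zero, given $[\sE:\sF_n]=n!$, a primitive element is exactly one whose $n!$ embedded images are distinct), and the idea of using $A^*=[I_n;e^\top]$ so that the fiber becomes a hyperplane section of the symmetric curve $\{p_\ell(y)=c_\ell,\ \ell\in[n]\}\subset \overline{k}^{\,n+1}$ is attractive. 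However, as written the argument has genuine gaps.

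First, the central genericity claim about $e$ is circular as stated: you say that two distinct orderings of one multiset satisfying the linear constraint ``forces a codimension-one linear condition on $e$,'' but the solutions $y$ themselves depend on $e$, so you cannot treat $y$ as fixed and impose conditions on $e$. The correct repair is an incidence-variety dimension count over the $e$-independent curve $C_c=\{p_\ell(y)=c_\ell\}$: for each nontrivial permutation $\pi$, the locus of pairs $(y,e)$ with $y\in C_c$, $y\in H_e$ and $\pi(y)\in H_e$ generically imposes two independent linear conditions on $e$, so its projection to $e$-space has dimension at most $n-1$; this works, but it is not what you wrote, and it also has to contend with the fact that in your chosen specialization the right-hand sides $c_\ell=p_\ell(x^*)+(e^\top x^*)^\ell$ themselves depend on $e$ (better: specialize $t_1,\dots,t_n$ to free generic values $c$, which is permissible since the $p_\ell(Ax)$ are algebraically independent over $k(A)$). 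Second, two facts you treat as free are exactly where the paper has to work: (i) that $p_1(Ax),\dots,p_n(Ax)$ have no common projective zero is not a consequence of ``$n$ generic hypersurfaces in $\PP^{n-1}$ meet emptily,'' because this family is highly constrained (e.g.\ $p_2(Ax)$ is the specific quadric $x^\top A^\top A x$); this is the paper's regular-sequence lemma, which rests on a nontrivial result from prior work, and the analogous no-common-zero statement for the specialized leading forms $p_\ell(x)+(e^\top x)^\ell$ must also be verified (it is true for generic $e$, since $\{p_1=\cdots=p_n=0\}$ in $n+1$ variables is a finite union of lines through the origin, but you need it to rule out roots at infinity); (ii) the ``upper semicontinuity'' step needs this no-roots-at-infinity statement plus the fact that the specialized system has $n!$ distinct solutions, so that the specialization of $\Delta$ (or of the degree-$n!$ eliminant in $r_{n+1}$, whose leading coefficient must not vanish at $A^*$) actually equals the discriminant computed from the specialized roots; none of this bookkeeping is in your sketch, whereas it is precisely what the paper's resultant lemmas supply. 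Finally, a smaller imprecision: ``$\sE$ arises as one of its residue fields, so $[\sE:\sF_n]=n!$'' does not follow from reducedness alone; you should note that the generic-fiber algebra $R\otimes_{R_n}\sF_n$ is a subring of $\sE$, hence a domain, hence (being Artinian) a field equal to $\sE$, which is how the paper gets $[\sE:\sF_n]=n!$ from freeness of rank $n!$.
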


\subsection{Acknowledgments} We thank Aldo Conca for reading the manuscript and giving us valuable comments and Matteo Varbaro for a pointer in the literature. {\color{black} Manolis C. Tsakiris is supported by the National Key R$\&$D Program of China (2023YFA1009402).  Lihong  Zhi is  supported by the National Key R$\&$D Program of China (2023YFA1009401) and the National Natural
Science Foundation of China (12071467). }

\section{The Field $\sF_m$}

Nothing of what we will say in this and the next section depends on the ground field, other than the requirement that it has characteristic zero; we thus fix throughout such a field $k$; we will denote by $k(A)$ the field of fractions of the polynomial ring $k[A]$. We fix polynomial rings over the field $k(A)$
\begin{equation*}
    S := k(A)[z_1,\dots,z_m], \, T := k(A)[y_1,\dots,y_m], \, R:=k(A)[x_1,\dots,x_n],
\end{equation*}
and define $k(A)$-algebra homomorphisms $ S \stackrel{\varphi}{\rightarrow} T \stackrel{\psi}{\rightarrow} R $, where $\varphi(z_i) = p_i$ and $\psi(y_i) = \sum_{j \in [n]} a_{ij} x_j$ for every $i \in [m]$. We let $\fp = I_{n+1}(A | y)$ be the ideal of $T$ generated by all $(n+1)$-minors of the $m \times (n+1)$ matrix $[A | y]$; here $y$ is the vector of variables $y_1,\dots,y_m$. Each such $(n+1)$-minor is a linear form of $T$ and thus $\fp$ is a prime ideal of $T$ whose height can be seen to be $m-n$. Since $\psi$ is surjective,  $\fp = \ker (\psi)$ and $R \cong T / \fp$.

Let $\fq = S \cap \fp$ be the contraction of $\fp$ to $S$ under $\varphi$; this is a prime ideal of $S$ whose residue field $S_\fq / \fq S_\fq$ we denote by $\kappa(\fq) $. We have an inclusion of integral domains $S/\fq \hookrightarrow T / \fp \cong R = k(A)[x]$, which identifies $S/\fq$ with the $k(A)$-subalgebra $k(A)[p_1(Ax),\cdots,p_m(Ax)]$ of $R$; we denote the field of fractions of this subalgebra by $\sF_m$. In turn, this induces an inclusion of fraction fields $\kappa(\fq) \cong \sF_m \subseteq \sE \cong \kappa(\fp)$, where we recall that $\sE$ is the field of fractions of the polynomial ring $k(A)[x]$, and $\kappa(\fp) = T_\fp / \fp T_\fp$. The main result of this section is:

\begin{proposition} \label{prp:Fm=E}
We have an equality of fields $\sF_m  = \sE$.
\end{proposition}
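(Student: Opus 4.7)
My plan is a Galois-theoretic argument; the inclusion $\sF_m \subseteq \sE$ is built into the construction, so I focus on the reverse inclusion. Because $k$ has characteristic zero, Newton's identities express each elementary symmetric polynomial $e_\ell(y_1,\ldots,y_m)$ as a polynomial with rational coefficients in the power sums $p_1(y),\ldots,p_\ell(y)$. Substituting $y = Ax$ gives $e_\ell(Ax) \in \sF_m$ for every $\ell$, so the polynomial $f(t) := \prod_{i=1}^m (t - y_i)$, with $y_i := \sum_j a_{ij} x_j$, has coefficients in $\sF_m$. All $m$ roots $y_i$ lie in $\sE$, and since any $n$ linearly independent ones---say $y_1,\ldots,y_n$---generate $\sE$ over $k(A)$, we have $\sF_m(y_1,\ldots,y_m) = \sE$, the splitting field of the separable polynomial $f$ over $\sF_m$. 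Therefore $\sE/\sF_m$ is Galois; set $G := \operatorname{Gal}(\sE/\sF_m)$.

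The action of $G$ on the root set $\{y_1,\ldots,y_m\}$ gives an injective group homomorphism $G \hookrightarrow S_m$, $\sigma \mapsto \tau_\sigma$, determined by $\sigma(y_i) = y_{\tau_\sigma(i)}$; injectivity holds because the $y_i$'s generate $\sE$ over $k(A) \subseteq \sF_m$. I then constrain the image of $G$ using linear syzygies. Since $y_1,\ldots,y_m$ are $m$ linear forms in the $n < m$ variables $x_j$, they satisfy $m-n$ independent $k(A)$-linear relations, spanning the left null space of $A$. Applying any $\sigma \in G$ to a syzygy $\sum_i c_i y_i = 0$ with $c_i \in k(A)$---which $\sigma$ fixes---yields $\sum_i c_{\tau_\sigma^{-1}(i)} y_i = 0$; consequently $\tau_\sigma$ stabilizes the left null space of $A$. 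Since coordinate permutations are orthogonal with respect to the standard bilinear form on $k(A)^m$, $\tau_\sigma$ also stabilizes the orthogonal complement $\operatorname{Range}(A) \subseteq k(A)^m$.

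The main obstacle is the last step: showing the stabilizer of $\operatorname{Range}(A)$ in $S_m$ is trivial for the matrix $A$ with algebraically independent entries. For each $\tau \neq 1$, the condition $\tau \cdot \operatorname{Range}(A) = \operatorname{Range}(A)$ is equivalent to the solvability of $P_\tau A = AM$ in some $M \in GL_n(k(A))$ (where $P_\tau$ is the permutation matrix of $\tau$), equivalently to the vanishing of all $(n+1)$-minors of $[A \mid P_\tau A]$; this cuts out a Zariski-closed subset of $k^{m \times n}$. I plan to verify that this subset is proper by exhibiting, for each $\tau$, a specific $A$ whose column span is not $\tau$-invariant via the eigenspace decomposition of $\tau$ acting on $k^m$: when $m > n$, the locus of $\tau$-invariant $n$-dimensional subspaces is a proper closed subvariety of the Grassmannian $\mathrm{Gr}(n,m)$, so not every $n$-subspace of $k^m$ is $\tau$-invariant. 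The generic matrix $A$ therefore avoids the finite union over $\tau \neq 1$, making $G$ trivial and yielding $\sE = \sF_m$.
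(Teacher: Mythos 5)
Your argument is correct, and it takes a genuinely different route from the paper. The paper proves $\sF_m=\sE$ by commutative algebra: it shows $T=k(A)[y]$ is free of rank $m!$ over the power-sum subalgebra (Lemma \ref{lem:m!}), identifies the primes lying over $\fq$ as the $m!$ distinct permutations $\pi(\fp)$ (Lemmas \ref{lem:Atiyah-MacDonald}, \ref{lem:lying over q}), and then uses an Artinian dimension count on $T\otimes_S\kappa(\fq)$ to force every residue field, in particular $\kappa(\fp)\cong\sE$, to equal $\kappa(\fq)\cong\sF_m$. You instead observe that Newton's identities put the coefficients of $f(t)=\prod_i(t-y_i)$, $y_i=\sum_j a_{ij}x_j$, inside $\sF_m$, so $\sE=\sF_m(y_1,\dots,y_m)$ is Galois over $\sF_m$ with group $G$ embedding into $S_m$ via its action on the distinct roots $y_i$; the $k(A)$-linear syzygies among the $y_i$ then force the image of $G$ to stabilize $\Range(A)\subseteq k(A)^m$, and triviality of that stabilizer for the generic $A$ gives $G=\{1\}$, i.e.\ $\sE=\sF_m$. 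Your stabilizer step is the exact analogue of the paper's Lemma \ref{lem:lying over q} (the ideal $\fp=I_{n+1}(A\,|\,y)$ encodes precisely the condition $y\in\Range(A)$, and $\pi(\fp)\neq\fp$ is the statement that no nontrivial permutation preserves that column span), but you replace the free-module and Artinian machinery by the Galois correspondence, which is shorter and arguably more elementary; the paper's route yields somewhat more structural information (e.g.\ the reduced fiber with exactly $m!$ points), which is in the spirit of its later rank computations. The only part you leave as a plan is the triviality of the stabilizer; it is easily completed and needs no eigenspace analysis: for $\tau\neq 1$ pick $i$ with $\tau(i)\neq i$ and specialize $A$ to the matrix $A^*$ whose columns are $e_i$ and $n-1$ further coordinate vectors chosen to avoid $e_{\tau(i)}$ (possible since $n\le m-1$); then the $(n+1)$-minor of $[A\,|\,P_\tau A]$ on rows indexed by those coordinates together with $\tau(i)$, and on the columns of $A$ together with the column of $P_\tau A$ coming from $e_i$, specializes to $\pm 1$, so it is a nonzero polynomial in $k[A]$ and the generic $\Range(A)$ is not $\tau$-invariant.
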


Towards proving Proposition \ref{prp:Fm=E}, we begin with a basic but important fact.

\begin{lemma} \label{lem:m!}
$T$ is a free $S$-module of rank $m!$.
\end{lemma}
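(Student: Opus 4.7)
The plan is to identify the image $\varphi(S) \subseteq T$ with the ring of symmetric polynomials and then invoke the classical freeness theorem for the polynomial ring over its symmetric subring.

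First I would check that $\varphi$ is injective, so that $S$ may be regarded as a subring of $T$. The power sums $p_1,\dots,p_m$ are algebraically independent over $k$ (hence over $k(A)$, which is purely transcendental over $k$ in disjoint variables): this is standard and follows, e.g., from Newton's identities, which express the elementary symmetric polynomials $e_1,\dots,e_m$ as polynomials in $p_1,\dots,p_m$ with coefficients in $\QQ$ and vice versa; since $k$ has characteristic zero, these identities are valid over $k(A)$. Consequently $\varphi$ embeds $S$ isomorphically onto the $k(A)$-subalgebra $k(A)[p_1,\dots,p_m] \subseteq T$, and by the same Newton identities this subalgebra coincides with $k(A)[e_1,\dots,e_m]$, i.e.\ with the invariant ring $T^{S_m}$ of the symmetric group action permuting $y_1,\dots,y_m$.

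Next I would appeal to the classical fact that $T = k(A)[y_1,\dots,y_m]$ is a free module of rank $m!$ over its subring of symmetric polynomials $T^{S_m}$. One concrete way to see this is to exhibit the Artin basis: the monomials $y_1^{a_1} y_2^{a_2} \cdots y_m^{a_m}$ with $0 \le a_i \le i-1$ (which number exactly $m!$) form a free $T^{S_m}$-basis of $T$. Abstractly this is an instance of the Chevalley--Shephard--Todd theorem, since $S_m$ acts on $T$ as a reflection group and the order of the group equals $m!$.

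Combining the two, $T$ is free of rank $m!$ over $\varphi(S) = T^{S_m}$, which via the isomorphism $S \stackrel{\sim}{\to} \varphi(S)$ induced by $\varphi$ yields the claim.

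I do not anticipate a genuine obstacle here: both ingredients (algebraic independence of the $p_i$'s in characteristic zero, and the Chevalley--Shephard--Todd/Artin freeness result) are classical, and the only mildly delicate point is to pass coefficients from $k$ to $k(A)$, which is harmless because the $a_{ij}$'s play no role in either statement.
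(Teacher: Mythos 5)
Your proposal is correct and follows essentially the same route as the paper: both reduce, via Newton's identities (valid since the characteristic is zero), to showing that $T$ is free of rank $m!$ over the subalgebra of symmetric polynomials, and your explicit attention to the injectivity of $\varphi$ (algebraic independence of the power sums, preserved in passing from $k$ to $k(A)$) is a harmless point left implicit there. The only difference is at the freeness step: you cite the classical Artin/Chevalley--Shephard--Todd result directly, whereas the paper gives a self-contained derivation of exactly the basis you name (the monomials $y_2^{\ell_2}\cdots y_m^{\ell_m}$ with $0\le \ell_i<i$) via the Gr\"obner basis of the ideal generated by $w_i-s_i(y)$ from Cox--Little--O'Shea; the paper itself records in a remark that its lemma is a special case of Chevalley--Shephard--Todd, so your shortcut loses nothing beyond self-containedness.
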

\begin{proof}
By virtue of Newton's identities, the subalgebra $\varphi(S)=k(A)[p_1,\dots,p_m]$ of $T$ coincides with the subalgebra $k(A)[s_1,\dots,s_m]$ generated by the $m$ elementary symmetric functions $s_1,\dots,s_m$ on the variables $y$; it thus suffices to prove that $T$ is a free module over $k(A)[s_1,\dots,s_m]$ of rank $m!$.

Let us consider the polynomial ring 
\begin{equation*}
    P=k(A)[y,w] = k(A)[y_1,\dots,y_m,w_1,\dots,w_m]
\end{equation*} 
of dimension $2m$, and the ideal $J$ of $P$ generated by all $w_i - s_i(y)$ for $i \in [m]$, where $s_i(y)$ is the $i$-th elementary symmetric function on the variables $y$. Under any monomial order on $P$ with $y_1>\cdots>y_m>w_1>\cdots>w_m$, Proposition 5 in Chapter 7 of \cite{Cox-2013} explicitly describes a Gr\"obner basis of $J$, consisting of $m$ polynomials $g_1,\dots,g_m \in P$ such that the leading term of $g_i$ is $y_i^i$. It immediately follows that a $k(A)$-vector space basis of $P/J$ is given by all monomials of the form $y_2^{\ell_2} \cdots y_m^{\ell_m} w_1^{b_1}\cdots w_m^{b_m}$, where the $b_i$'s range over the non-negative integers while $0 \le \ell_i < i$.

Now consider the $k(A)$-algebra epimorphism $\theta: P=k(A)[y,w] \rightarrow T = k(A)[y]$ defined by $\theta(w_i) = s_i(y)$ and $\theta(y_i)=y_i$ for every $i \in [m]$. We have that $J = \ker(\theta)$ and so $T = P/J$; since $P/J$ is generated over $k(A)$ by the $m!$ monomials $y_2^{\ell_2} \cdots y_m^{\ell_m}$ as above and all monomials in $w$, $T$ is a fortiori generated over $k(A)[s_1(y),\dots,s_m(y)]\cong k(A)[w_1,\dots,w_m]$ by the $y_2^{\ell_2} \cdots y_m^{\ell_m}$'s. To show these monomials are free generators, suppose there is a relation $\sum_{0 \le \ell_i < i} c_{\ell_2,\dots, \ell_m}(y) y_2^{\ell_2} \cdots y_m^{\ell_m} =0$ in $T$, with $c_{\ell_2,\dots, \ell_m}(y) \in k(A)[s_1(y),\dots,s_m(y)]$. Write $c_{\ell_2,\dots, \ell_m}(y) = f_{\ell_2,\dots, \ell_m}(s_1(y),\dots,s_m(y))$, where $f_{\ell_2,\dots, \ell_m}$ is a polynomial in $m$ variables with coefficients in $k(A)$. Note $\theta(f_{\ell_2,\dots, \ell_m}(w)) = c_{\ell_2,\dots, \ell_m}(y)$, whence $\sum_{0 \le \ell_i < i} f_{\ell_2,\dots, \ell_m}(w) y_2^{\ell_2} \cdots y_m^{\ell_m} \in \ker(\theta)$. Since $J = \ker(\theta)$ and the $g_i$'s form a Gr\"obner basis of $J$, the leading term of $\sum_{0 \le \ell_i < i} f_{\ell_2,\dots, \ell_m}(w) y_2^{\ell_2} \cdots y_m^{\ell_m}$ must be divisible by $y_i^i$ for some $i \in [m]$; however, it is seen from the form of $\sum_{0 \le \ell_i < i} f_{\ell_2,\dots, \ell_m}(w) y_2^{\ell_2} \cdots y_m^{\ell_m}$ that this is impossible, unless this is the zero polynomial. Since all monomials in $y$ freely generate $k(A)[y,w]$ as a module over $k(A)[w]$, we in turn have that all $f_{\ell_2,\dots, \ell_m}(w)$'s and thus all $c_{\ell_2,\dots, \ell_m}(y)$'s are zero.
\end{proof}

\begin{remark}
The fact that $T$ is a free $S$-module is a special case of the well-known Chevalley-Shephard-Todd theorem.
\end{remark}

Lemma \ref{lem:m!} implies that $T$ is integral over $\varphi(S)$; this is a manifestation of a general fact:

\begin{lemma}[Exercises 12 \& 13, Chapter 5, \cite{Atiyah-MacDonald}] \label{lem:Atiyah-MacDonald}
Let $R$ be a commutative ring and $\Pi$ a finite group acting on $R$; denote by $R^\Pi$ the subring of $R$ consisting of the invariant elements of $R$ with respect to the action of $\Pi$. Then the ring extension $R^\Pi \subseteq R$ is integral, and for any prime ideal $\fp$ of $R$, the prime ideals of $R$ that lie over $\fp \cap R^\Pi$ are the orbit $\{\pi(\fp): \, \pi \in \Pi\}$ of $\fp$ under $\Pi$.
\end{lemma}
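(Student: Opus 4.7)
The plan is to dispatch the two assertions of the lemma in turn. For the integrality claim, given any $r \in R$ I would consider the monic polynomial
\[
f(T) = \prod_{\pi \in \Pi}\bigl(T - \pi(r)\bigr) \in R[T].
\]
Since the group $\Pi$ permutes the roots $\{\pi(r) : \pi \in \Pi\}$ among themselves, every coefficient of $f$ is $\Pi$-invariant and therefore lies in $R^\Pi$; as the identity element of $\Pi$ contributes the factor $T-r$, we have $f(r) = 0$, exhibiting $r$ as integral over $R^\Pi$.

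For the description of the primes over $\fq := \fp \cap R^\Pi$, I would first check the easy inclusion: each $\pi \in \Pi$ acts as a ring automorphism of $R$, so $\pi(\fp)$ is prime, and the identity
\[
\pi(\fp) \cap R^\Pi \;=\; \pi\bigl(\fp \cap R^\Pi\bigr) \;=\; \pi(\fq) \;=\; \fq
\]
holds because $\fq \subseteq R^\Pi$ is fixed pointwise by $\Pi$. Hence every orbit member lies over $\fq$.

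For the reverse inclusion, let $\fp'$ be a prime of $R$ with $\fp' \cap R^\Pi = \fq$, and suppose for contradiction that $\fp' \ne \pi(\fp)$ for every $\pi \in \Pi$. The incomparability property of the integral extension $R^\Pi \subseteq R$ (already established above) implies that distinct primes of $R$ contracting to the same prime of $R^\Pi$ satisfy no inclusion; hence $\fp' \not\subseteq \pi(\fp)$ for all $\pi$. Prime avoidance applied to the finite family of prime ideals $\{\pi(\fp) : \pi \in \Pi\}$ then produces
\[
x \in \fp' \,\setminus\, \bigcup_{\pi \in \Pi} \pi(\fp).
\]
The product $y := \prod_{\pi \in \Pi} \pi(x)$ is $\Pi$-invariant, hence lies in $R^\Pi$, while it also lies in $\fp'$ since one of its factors (namely $x$) does. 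Therefore $y \in \fp' \cap R^\Pi = \fq \subseteq \fp$, and primeness of $\fp$ forces $\pi_0(x) \in \fp$ for some $\pi_0 \in \Pi$, i.e.\ $x \in \pi_0^{-1}(\fp)$, contradicting the choice of $x$.

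The main obstacle is the final contradiction, which rests on two standard but non-trivial ingredients of commutative algebra: incomparability of primes in an integral extension (used to upgrade $\fp' \ne \pi(\fp)$ to $\fp' \not\subseteq \pi(\fp)$), and prime avoidance over a finite family of primes (used to produce the element $x$). Once these are invoked, the rest of the argument is formal manipulation with the $\Pi$-action.
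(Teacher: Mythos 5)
Your proof is correct. The paper gives no argument for this lemma---it simply cites Exercises 12 and 13 of Chapter 5 of Atiyah--MacDonald---and what you write is exactly the standard solution of those exercises: the invariant monic polynomial $\prod_{\pi \in \Pi}\bigl(T-\pi(r)\bigr)$ for integrality, and the invariant product $\prod_{\pi\in\Pi}\pi(x)$ combined with prime avoidance and incomparability for the description of the primes lying over $\fp \cap R^\Pi$.
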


We have:

\begin{lemma} \label{lem:lying over q}
The prime ideals of $T$ that lie over $\fq = S \cap \fp$ are precisely of the form $\pi(\fp)$, where $\pi$ is a permutation of the variables $y_1,\dots,y_m$; these are $m!$ distinct prime ideals.
\end{lemma}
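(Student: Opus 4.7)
The strategy is to apply Lemma \ref{lem:Atiyah-MacDonald} to the natural action of $\Pi = S_m$ on $T$ by permutation of the variables $y_i$. First I would identify the invariant subring $T^\Pi$: the standard structure theorem for symmetric polynomials gives $T^\Pi = k(A)[s_1(y),\dots,s_m(y)]$, and Newton's identities (valid because $\mathrm{char}(k)=0$) yield $k(A)[s_1(y),\dots,s_m(y)] = k(A)[p_1,\dots,p_m] = \varphi(S)$. Since the power sums $p_\ell$ are algebraically independent over $k(A)$, the restriction $\varphi\colon S \to T^\Pi$ is an isomorphism carrying $\fq = S \cap \fp$ to $T^\Pi \cap \fp$. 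Lemma \ref{lem:Atiyah-MacDonald} then identifies the primes of $T$ lying over $\fq$ with the $S_m$-orbit $\{\pi(\fp) : \pi \in S_m\}$.

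What remains is to show that this orbit has exactly $m!$ elements, i.e., that the stabilizer $H = \{\pi \in S_m : \pi(\fp) = \fp\}$ is trivial. Any $\pi \in H$ descends to a $k(A)$-algebra automorphism $\bar\pi$ of $T/\fp \cong R = k(A)[x]$ satisfying $\bar\pi((Ax)_i) = (Ax)_{\pi(i)}$. Since the entries of $A$ are algebraically independent over $k$, $A$ has rank $n$ over $k(A)$, so the linear forms $(Ax)_1,\dots,(Ax)_m$ span $k(A) x_1 + \cdots + k(A) x_n$; consequently each $x_j$ is a $k(A)$-linear combination of them, and hence each $\bar\pi(x_j)$ is again a linear form. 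Writing $\bar\pi(x) = Bx$ for some $B \in k(A)^{n \times n}$ and equating coefficients in $\bar\pi((Ax)_i) = (Ax)_{\pi(i)}$ produces the matrix identity $AB = Q_\pi A$, where $Q_\pi$ is the permutation matrix whose $i$-th row is $e_{\pi(i)}^{T}$.

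The final step, which I expect to be the main obstacle, is to rule out $AB = Q_\pi A$ for $\pi \neq e$. Restricting to an $n$-subset $I \subseteq [m]$ of rows and taking determinants gives $\det(A_I)\det(B) = \pm \det(A_{\pi(I)})$, so every such ratio equals $\pm \det(B)$; iterating $\pi$ along its orbit on $\binom{[m]}{n}$ shows that some power of $\det(B)$ equals $\pm 1$, hence $\det(B)$ is a root of unity in $k(A)$, and since $k(A)/k$ is purely transcendental (so $k$ is algebraically closed in $k(A)$) this forces $\det(B) \in k$. To finish, for $\pi \neq e$ I would pick $i_0$ with $\pi(i_0) \neq i_0$ and an $n$-subset $I$ containing $i_0$ but not $\pi(i_0)$, which is possible because $m \geq n+1$: then $\det(A_{\pi(I)})$, viewed as a polynomial in the $a_{ij}$'s, involves $a_{\pi(i_0),1}$ with a nonzero coefficient, whereas $\det(A_I)$ is a polynomial in a disjoint set of the $a_{ij}$'s. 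The identity $\det(A_{\pi(I)}) = c \cdot \det(A_I)$ with $c \in k$ thus contradicts the algebraic independence of the entries of $A$, forcing $H = \{e\}$ and completing the proof.
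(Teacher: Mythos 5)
Your proof is correct, and while its first half coincides with the paper's (both identify the primes of $T$ lying over $\fq$ with the $S_m$-orbit of $\fp$ via Lemma \ref{lem:Atiyah-MacDonald}, after observing that $\varphi(S)=k(A)[s_1,\dots,s_m]$ is the invariant ring), your argument that the orbit has exactly $m!$ elements takes a genuinely different route. The paper proves $\pi(\fp)\neq\fp$ for $\pi\neq e$ by writing the reduced Gr\"obner bases of $\fp$ and $\pi(\fp)$ explicitly (both are ideals of linear forms, with coefficients that are ratios of maximal minors of $A$) and invoking uniqueness of the reduced Gr\"obner basis to force an equality of coefficients that contradicts the algebraic independence of the entries of $A$. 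You instead pass to the quotient $T/\fp\cong k(A)[x]$: a stabilizing $\pi$ induces a $k(A)$-algebra automorphism acting linearly, giving a matrix identity $AB=Q_\pi A$, and then $n\times n$ minors yield $\det(A_{\pi(I)})=\pm\det(B)\det(A_I)$ for every $n$-subset $I$. Your orbit/root-of-unity step forcing $\det(B)\in k$ is essential (without it $\det(B)$ could a priori be the nonconstant ratio of two minors) and is correctly executed, using that $k$ is algebraically closed in the purely transcendental extension $k(A)$; the final contradiction needs only that $a_{\pi(i_0),1}$ occurs in $\det(A_{\pi(I)})$ but not in $\det(A_I)$ --- your phrase ``a disjoint set of the $a_{ij}$'s'' overstates this, since $I$ and $\pi(I)$ may overlap, but the argument is unaffected. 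Both proofs rest on the same genericity fact (no nontrivial coordinate permutation preserves the column space of the generic $m\times n$ matrix when $m\ge n+1$, an assumption the paper also uses implicitly); the paper's route is more computational and self-contained, whereas yours is more structural, isolates the stabilizer as a concrete matrix equation, and adapts readily to other group actions on the coordinates.
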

\begin{proof}
That a prime ideal of $T$ lies over $\fq$ if and only if it is of the form $\pi(\fp)$, follows from Lemma \ref{lem:Atiyah-MacDonald}. We prove that all $m!$ such prime ideals $\pi(\fp)$ are distinct. For this, it is enough to prove that $\pi(\fp) \neq \fp$ as soon as $\pi$ is not the identity permutation; without loss of generality, we may assume that $\sigma (\cJ) \neq \cJ$ for $\cJ = \{m-n+1,\dots,m\}$ and $\sigma = \pi^{-1}$.

For $i_1,\dots,i_n,i_{n+1}$ distinct elements of $m$, we denote by $A_{i_1,\dots,i_n}$ the determinant of the $n \times n$ matrix, whose $s$-th row is the $i_s$-th row of $A$, and by $A_{i_1,\dots,\hat{i}_s,\dots,i_{n+1}}$ the determinant as above associated to rows $i_1,\dots,i_{s-1},i_{s+1},\dots,i_{n+1}$. With this notation, for any monomial order on $T$ with $y_1 > y_2 > \cdots > y_m$, the $m-n$ linear forms of $T$ given by
\begin{equation*}
    \ell_i := y_i + \sum_{s \in \cJ} (-1)^{s-m+n+1}  \frac{\det(A_{i,m-n+1,...,\hat{s},\dots,m})}{\det(A_{m-n+1,\dots,m})} y_{s}, \, \, \, i \in [m-n]
\end{equation*}
\noindent are a reduced Gr\"obner basis of $\fp$. Similarly for $i \in [m-n]$, the linear forms
\begin{equation*}
    \mu_i := y_i + \sum_{s \in \sigma(\cJ)} (-1)^{s-m+n+1}  \frac{\det(A_{\sigma(i),\sigma(m-n+1),...,\hat{s},\dots,\sigma(m)})}{\det(A_{\sigma(m-n+1),\dots,\sigma(m)})} y_{s}
\end{equation*}
\noindent are a reduced Gr\"obner basis of $\pi(\fp)$. Since a reduced Gr\"obner basis is unique, if $\fp = \pi(\fp)$, necessarily $\ell_i = \mu_i$ for every $i \in [m-n]$. In particular, for any $i \in [m-n]$ and any $s \in \cJ$, the coefficient of $y_s$ in $\ell_i$, must be equal to the coefficient of $y_s$ in $\mu_i$.

By our hypothesis $\sigma (\cJ) \neq \cJ$, there exist $s \in \cJ \setminus \sigma (\cJ)$ and $i \in \sigma (\cJ) \setminus \cJ \subseteq [m-n]$. The coefficient of $y_s$ in $\ell_i$ is $(-1)^{s-m+n+1}  \det(A_{i,m-n+1,...,\hat{s},\dots,m})/\det(A_{m-n+1,\dots,m})$. On the other hand, since $i \in [m-n]$ and $s \neq i$ with $s \not\in \sigma (\cJ)$, the coefficient of $y_s$ in $\mu_i$ is zero.
\end{proof}

Proposition \ref{prp:Fm=E} is a special case of the following result for $\mathfrak{P}=\fp$:

\begin{lemma}
Let $\mathfrak{P}$ be a prime ideal of $T$ lying over $\fq$. Then $\kappa(\mathfrak{P}) = \kappa(\fq)$.
\end{lemma}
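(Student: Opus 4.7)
The plan is to combine the two ingredients already at hand: by Lemma \ref{lem:m!}, $T$ is free of rank $m!$ over $S$, and by Lemma \ref{lem:lying over q}, there are exactly $m!$ prime ideals of $T$ lying over $\fq$. These two counts will match up via the standard $\sum e_i f_i = n$ identity from Cohen--Seidenberg theory, forcing each residue field extension to be trivial.

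More concretely, I would first localize at $\fq$. Writing $W = S \setminus \fq$ and $B := W^{-1} T$, freeness of $T$ over $S$ gives that $B$ is a free $S_\fq$-module of rank $m!$, hence $B / \fq B$ is a $\kappa(\fq)$-vector space of dimension exactly $m!$. On the other hand, $B$ is semilocal with maximal ideals precisely $W^{-1} \mathfrak{P}'$ for $\mathfrak{P}'$ a prime of $T$ lying over $\fq$, and the ideal $\fq B$ is contained in the Jacobson radical of $B$ (since each $\mathfrak{P}'$ contains $\fq$). By the Chinese Remainder Theorem applied to the pairwise comaximal maximal ideals of the semilocal ring $B$, there is an isomorphism
\begin{equation*}
B / \fq B \;\cong\; \prod_{\mathfrak{P}' \text{ over } \fq} T_{\mathfrak{P}'} \big/ \fq T_{\mathfrak{P}'},
\end{equation*}
where each factor is an Artinian local ring with residue field $\kappa(\mathfrak{P}')$.

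Next, I would estimate the $\kappa(\fq)$-dimension of each factor: $T_{\mathfrak{P}'} / \fq T_{\mathfrak{P}'}$ surjects onto its residue field $\kappa(\mathfrak{P}')$, so its $\kappa(\fq)$-dimension is at least $[\kappa(\mathfrak{P}') : \kappa(\fq)] \ge 1$. Combining this with the displayed isomorphism yields
\begin{equation*}
m! \;=\; \dim_{\kappa(\fq)} B/\fq B \;\ge\; \sum_{\mathfrak{P}' \text{ over } \fq} [\kappa(\mathfrak{P}') : \kappa(\fq)].
\end{equation*}
By Lemma \ref{lem:lying over q} the sum on the right runs over exactly $m!$ terms, each at least $1$. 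Therefore every term must equal $1$, which means $\kappa(\mathfrak{P}') = \kappa(\fq)$ for every prime $\mathfrak{P}'$ of $T$ lying over $\fq$, and in particular for the given $\mathfrak{P}$.

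I do not anticipate a real obstacle; the only thing to be careful about is the bookkeeping in the CRT/localization step, specifically verifying that $\fq B$ sits inside the Jacobson radical and that the maximal ideals of $B$ are pairwise comaximal so that the product decomposition is legitimate. Both are immediate from the fact that $T/S$ is integral (Lemma \ref{lem:m!}) together with Lemma \ref{lem:Atiyah-MacDonald}, so the argument is essentially a packaging of the two preceding lemmas.
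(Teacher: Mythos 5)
Your proposal is correct and is essentially the paper's own argument: your $B/\fq B$ is the ring $T\otimes_S\kappa(\fq)$ the paper works with, and both proofs decompose this $m!$-dimensional Artinian $\kappa(\fq)$-algebra into a product of $m!$ local factors indexed by the primes over $\fq$ (Lemmas \ref{lem:m!} and \ref{lem:lying over q}) and then force each factor, hence each residue field extension, to be one-dimensional over $\kappa(\fq)$. The only cosmetic difference is that you bound each factor's dimension below by $[\kappa(\mathfrak{P}'):\kappa(\fq)]$ via the surjection onto the residue field, while the paper concludes each factor equals $\kappa(\fq)$ and then identifies it with $\kappa(\pi(\fp))$.
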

\begin{proof}
As $T$ is a free $S$-module of rank $m!$ by Lemma \ref{lem:m!}, $T \otimes_S \kappa(\fq)$ is a $\kappa(\fq)$-vector space of dimension $m!$. Since $T \otimes_S \kappa(\fq)$ is a finitely generated $\kappa(q)$-algebra, which is also a finite-dimensional $\kappa(\fq)$-vector space, it must be an Artinian ring. The prime ideals of $T \otimes_S \kappa(\fq)$ correspond to the prime ideals of $T$ lying over $\fq $. By Lemma \ref{lem:lying over q}, these are the $m!$ distinct prime ideals $\pi(\fp)$, with $\pi$ ranging over all permutations of the variables $y_1,\dots,y_m$. Quite generally, an Artinian ring is isomorphic to the product of its localizations at its prime ideals, hence
\begin{equation*}
    T \otimes_S \kappa(\fq) = \prod_{\pi} T_{\pi(\fp)} \otimes_S \kappa(\fq).
\end{equation*}
\noindent Now, each $T_{\pi(\fp)} \otimes_S \kappa(\fq) = T_{\pi(\fp)} / \fq T_{\pi(\fp)}$ is an Artinian local ring and a finite $\kappa(\fq)$-vector space. Since $T_{\pi(\fp)} \otimes_S \kappa(\fq)$ is an $m!$-dimensional $\kappa(\fq)$-vector space and there are $m!$ factors in the product, it must be that each $T_{\pi(\fp)} \otimes_S \kappa(\fq)$ is a $1$-dimensional $\kappa(\fq)$-vector space, for every $\pi$. But $\kappa(\fq)$ is contained in every $T_{\pi(\fp)} \otimes_S \kappa(\fq)$, so that $T_{\pi(\fp)} \otimes_S \kappa(\fq) = \kappa(\fq)$ for every $\pi$. Since $T_{\pi(\fp)} / \fq T_{\pi(\fp)}$ is a field, it must be that $\fq T_{\pi(\fp)} = \pi(\fp) T_{\pi(\fp)}$ and so $T_{\pi(\fp)} / \fq T_{\pi(\fp)} =T_{\pi(\fp)} / \pi(\fp) T_{\pi(\fp)}= \kappa(\pi(\fp))$.
\end{proof}

\section{The Field Extension $\sF_n \subseteq \sF_m$}

We denote by $\sF_{n}$ the field of rational functions $k(A)(p_1(Ax),\dots,p_n(Ax))$; this is a subfield of $\sE = k(A,x)$, this latter coinciding with $\sF_m$ by Proposition \ref{prp:Fm=E}. The main result of this section is:

\begin{proposition} \label{prp:F_n to F_m}
The field extension $\sF_n \subset \sF_m$ is algebraic of degree $[\sF_m : \sF_n] = n!$.
\end{proposition}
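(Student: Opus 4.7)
The plan is to compute $[\sF_m : \sF_n]$ as the generic fiber cardinality of the polynomial morphism
\[
\Phi \,:\, \mathbb{A}^n_{x,\,k(A)} \longrightarrow \mathbb{A}^n_{z,\,k(A)}, \qquad x \longmapsto \bigl(p_1(Ax),\ldots, p_n(Ax)\bigr),
\]
which realizes the ring inclusion $R' := k(A)[p_1(Ax),\ldots, p_n(Ax)] \hookrightarrow R := k(A)[x]$. Since $p_\ell(Ax)$ is homogeneous of degree $\ell$ in $x$, projective Bezout applied to the homogenizations $V(p_\ell(Ax) - z_\ell x_0^\ell) \subset \mathbb{P}^n_{[x_0:x]}$ gives the a priori upper bound $[\sF_m : \sF_n] \le 1 \cdot 2 \cdots n = n!$; the task is then to certify that the projective intersection lives entirely in the affine chart $x_0 \ne 0$ and is \'{e}tale over the generic $z$.

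First I would verify that $p_1(Ax), \ldots, p_n(Ax)$ are algebraically independent over $k(A)$, which implies that $\sF_n$ has transcendence degree $n$ over $k(A)$ (matching $\sF_m = k(A,x)$), hence $\sF_m/\sF_n$ is algebraic and $\Phi$ is dominant with codomain coordinatized by $R' \cong k(A)[z_1, \ldots, z_n]$. A Jacobian computation via Cauchy-Binet expresses $\det\bigl(\partial p_\ell(Ax)/\partial x_k\bigr)_{\ell,k \in [n]}$ as $n!$ times a sum over $n$-subsets $I \subseteq [m]$ of $(\det A_I)\cdot\prod_{i<j \in I}\bigl((Ax)_j - (Ax)_i\bigr)$; evaluating at the specialization $A_0$ whose first $n$ rows form $I_n$ and whose remaining rows vanish leaves only $I = [n]$, yielding the nonzero polynomial $n! \prod_{1 \le i<j \le n}(x_j - x_i)$. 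The heart of the argument is then to rule out points at infinity in the projective intersection: at $x_0 = 0$ the equations collapse to the common zero set of $p_1(Ax), \ldots, p_n(Ax)$ in $\mathbb{P}^{n-1}_{\overline{k(A)}}$, which under $y = Ax$ becomes $V \cap \mathrm{col}(A)$ in $\overline{k(A)}^m$, where $V := V(p_1(y), \ldots, p_n(y)) = V(s_1(y), \ldots, s_n(y))$ by Newton's identities (characteristic zero) and $s_\ell$ is the $\ell$th elementary symmetric function.

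Because $T$ is free (hence flat) of rank $m!$ over the polynomial ring $k(A)[s_1,\ldots, s_m]$ by Lemma~\ref{lem:m!}, the regular sequence $s_1, \ldots, s_n$ there remains regular in $T$, so $V$ has pure codimension $n$ and thus $\dim V = m - n$. Paired with $\dim \mathrm{col}(A) = n$, the expected dimension of $V \cap \mathrm{col}(A)$ is zero, and since both are cones such an intersection must reduce to $\{0\}$. To make this rigorous at the generic $A$, I would apply upper semicontinuity of fiber dimension to the incidence family $\{(A, y) : y \in \mathrm{col}(A) \cap V\} \to \mathbb{A}^{mn}_A$ and verify the trivial-intersection property at $A_0$: there $\mathrm{col}(A_0) = \{y : y_{n+1} = \cdots = y_m = 0\}$, and $s_\ell(y_1,\ldots, y_n, 0, \ldots, 0) = s_\ell(y_1, \ldots, y_n)$, so the vanishing of $s_1, \ldots, s_n$ forces $\prod_i(t - y_i) = t^n$, hence $y = 0$. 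Granted this, the full Bezout count $n!$ lives in the affine chart; for generic $z$ the fiber of $\Phi$ is \'{e}tale in characteristic zero and so consists of $n!$ distinct geometric points, yielding $[\sF_m : \sF_n] = n!$. The transversality of the generic $n$-plane $\mathrm{col}(A)$ with $V$ — equivalently, the semicontinuity transfer from $A_0$ to generic $A$ — is the one place where genericity of $A$ is essentially used, and I expect it to be the main technical obstacle.
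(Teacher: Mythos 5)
Your proposal is sound and reaches the right conclusion, but by a genuinely different route than the paper. The paper's proof is module-theoretic: it first shows that $p_1(Ax),\dots,p_n(Ax)$ form a regular sequence of $R=k(A)[x]$ (Lemma~\ref{lem:regular sequence}, proved by a specialization argument that cites Lemma~4 of the earlier unlabeled-sensing paper), then upgrades this to the statement that $R$ is a \emph{free} $R_n$-module of rank $n!$ (Lemma~\ref{lem:n!}, via flatness from the regular sequence, finite generation, and graded projective $=$ free), and finally obtains $[\sF_m:\sF_n]=n!$ by tensoring with $K(R_n)$ and identifying $R\otimes_{R_n}K(R_n)$ with $K(R)$. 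You instead run the classical geometric count: Bezout in $\mathbb{P}^n$ gives the bound $n!$, absence of points at infinity makes the map $\Phi$ finite so the full count lives in the affine chart, and generic \'etaleness in characteristic zero (your Cauchy--Binet Jacobian, which specializes at $A_0=[I_n\;0]^{\top}$ to $n!\prod_{i<j}(x_j-x_i)\neq 0$) gives $n!$ distinct points in a general fiber, hence degree $n!$. The two arguments hinge on exactly the same key input --- that $p_1(Ax),\dots,p_n(Ax)$ have no nontrivial common zero over $\overline{k(A)}$, which by Remark~\ref{rem:regular-resultant} is the regular-sequence condition --- and your verification of it is arguably more self-contained than the paper's: you check it at the explicit specialization $A_0$ (where Newton's identities force the first $n$ power sums in $n$ variables to cut out only the origin) and transfer to generic $A$, rather than invoking the cited genericity lemma. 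What the paper's route buys is the freeness statement of Lemma~\ref{lem:n!} itself, which has independent interest; what your route buys is a shorter, more geometric path to the degree. One point needs care in your write-up: the phrase ``both are cones so the expected-dimension-zero intersection is $\{0\}$'' is not a proof by itself, and upper semicontinuity of fiber dimension \emph{on the target} requires properness --- you should either projectivize the conical fibers of the incidence family (so the image of the bad locus is closed, misses $A_0$, hence misses the generic point of $\mathbb{A}^{mn}$) or simply use the Macaulay resultant $\cR(p_1(Ax),\dots,p_n(Ax))\in k[A]$, which commutes with specialization and is nonzero at $A_0$, and therefore nonzero in $k(A)$; either fix also makes explicit that the conclusion holds over $\overline{k(A)}$ (at the generic point), which is what the proposition requires, and not merely for a Zariski-general numerical matrix $A^*$.
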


Towards proving Proposition \ref{prp:F_n to F_m}, we prove a fundamental fact:

\begin{lemma} \label{lem:regular sequence}
The polynomials $p_1(Ax),\dots,p_n(Ax)$ are a regular sequence of $R=k(A)[x]$.
\end{lemma}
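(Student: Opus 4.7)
The plan is to reduce the statement to a zero-dimensionality claim and then verify that claim by specializing the generic matrix $A$ to a carefully chosen numerical matrix. Since $R = k(A)[x_1,\dots,x_n]$ is a Cohen--Macaulay graded ring of Krull dimension $n$ and the elements $p_1(Ax),\dots,p_n(Ax)$ are homogeneous in $x$ of degrees $1,2,\dots,n$, they constitute a regular sequence if and only if they form a homogeneous system of parameters, equivalently if and only if $V(p_1(Ax),\dots,p_n(Ax)) = \{0\}$ in $\overline{k(A)}^n$.

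Writing $B = k[a_{ij}]$ and viewing $A = (a_{ij})$ as variables, the ideal $J = (p_1(Ax),\dots,p_n(Ax))$ sits inside $B[x_1,\dots,x_n]$, and the projection $V(J) \to \mathrm{Spec}\, B$ has fiber $V(p_1(A^*x),\dots,p_n(A^*x))$ over each point $A^* \in \mathrm{Spec}\, B$. By Chevalley's theorem on upper semicontinuity of fiber dimension, the locus $\{A^* : \dim V(p_1(A^*x),\dots,p_n(A^*x)) \geq 1\}$ is Zariski closed in $\mathrm{Spec}\, B$. It therefore suffices to exhibit a single specialization $A^*$ for which the corresponding fiber is zero-dimensional: the closed set above will then be proper, cannot contain the generic point, and consequently the generic fiber, which is exactly $V(p_1(Ax),\dots,p_n(Ax))$ over $k(A)$, will also be zero-dimensional.

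For such a specialization I would take $A^*$ to be the $m \times n$ matrix whose top $n \times n$ block is the identity and whose remaining $m-n$ rows are zero. Then $A^*x = (x_1,\dots,x_n,0,\dots,0)^\top$, so that $p_\ell(A^*x) = x_1^\ell + \cdots + x_n^\ell$ is simply the $\ell$-th power sum in $n$ variables. The classical Chevalley--Shephard--Todd theorem applied to $S_n$ acting by permutation on $k[x_1,\dots,x_n]$ guarantees that $p_1(x),\dots,p_n(x)$ form a regular sequence, so $V(p_1(x),\dots,p_n(x)) = \{0\}$ in $\overline{k}^n$, and this is exactly what is needed. The only non-obvious step is identifying the right specialization; once that is in hand the semicontinuity reduction is routine, since the generic point of an irreducible scheme lies in every non-empty Zariski open subset.
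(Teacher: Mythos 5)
Your proof is correct in substance, but it reaches the conclusion by a genuinely different route than the paper. The paper characterizes regularity through Hilbert functions and Castelnuovo--Mumford regularity, encodes the condition $[J]_a=[\fm]_a$ in the maximal minors of a Macaulay-type coefficient matrix $H$ with entries in $k[A]$, and then transfers from a numerical specialization to the generic matrix by noting that identically vanishing minors would contradict Lemma 4 of \cite{Tsakiris-TIT2020}, which supplies a \emph{generic} $A^*$ for which $p_1(A^*x),\dots,p_n(A^*x)$ is regular. You instead use the cleaner characterization ``regular sequence $\Leftrightarrow$ homogeneous system of parameters $\Leftrightarrow$ only the trivial common zero'' (valid since $R$ is Cohen--Macaulay and the $p_i(Ax)$ are homogeneous of positive degree), and you replace the citation of an external genericity lemma by an \emph{explicit} specialization, the identity-on-top matrix, which reduces the verification to the classical fact that the first $n$ power sums in $n$ variables vanish simultaneously only at the origin (Newton's identities or Chevalley--Shephard--Todd in characteristic zero). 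This makes the argument self-contained and more elementary; what the paper's version buys in exchange is that its matrix $H$ is exactly Macaulay's resultant matrix, which is reused in Section 4 (see Remark \ref{rem:regular-resultant}), so the extra machinery is not wasted there.

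One step should be tightened. Chevalley's semicontinuity theorem, as usually stated, gives upper semicontinuity of $x\mapsto\dim_x f^{-1}(f(x))$ on the \emph{source}; on the target the locus $\{A^*:\dim V(p_1(A^*x),\dots,p_n(A^*x))\ge 1\}$ is in general only constructible for a non-proper morphism, and constructibility alone would not let you exclude the generic point (a dense open set also misses your chosen $A^*$). The closedness you need is nevertheless true here because the fibers are affine cones: pass to the associated closed subscheme of $\PP^{n-1}\times\operatorname{Spec} k[A]$, whose projection to $\operatorname{Spec} k[A]$ is proper, so the bad locus is the (closed) image of that projection; equivalently, it is the vanishing locus of the Macaulay resultant $\cR\bigl(p_1(Ax),\dots,p_n(Ax)\bigr)\in k[A]$, which is precisely the mechanism underlying the paper's matrix $H$. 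With that justification inserted, your specialization argument goes through as stated.
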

\begin{proof}
For a homogeneous ideal $J$ in a polynomial ring $P$ over a field, it follows from Serre's theorem on Hilbert functions \cite[Theorem 4.4.3]{bruns1998cohen} that the Hilbert polynomial of $P/J$ agrees with the Hilbert function of $P/J$ at degrees greater than or equal to the Castelnuovo-Mumford (CM) regularity of $J$. It can be seen via the Koszul complex that the CM-regularity of an ideal $J$ generated by a regular sequence of $n$ elements of degrees $1,2,3,\dots,n$ is $\reg(J)=n(n-1)/2+1$.
			
Now let $J$ be the ideal of $R$ generated by $p_1(Ax),\dots,p_n(Ax)$. Then the sequence $p_1(Ax),\dots,p_n(Ax)$ is regular if and only if $R/J$ has zero Krull dimension, if and only if the Hilbert polynomial of $R/J$ is the zero polynomial, if and only if $[J]_{a}=[\fm]_{a}$; here $a=\reg(J) = n(n-1)/2+1$, $[J]_a$ denotes the degree-$a$ homogeneous part of $J$, and $\fm$ is the ideal of $R$ generated by $x_1,\dots,x_n$.
			
Let $t$ be the dimension of $[\fm]_a$ as a $k(A)$-vector space. Take generators for $[J]_a$ by multiplying every $p_i(Ax)$ by all monomials of degree $a-i$. Make a matrix $H$ whose columns contain the coefficients of the generators of $[J]_a$ on the basis of $[\fm]_a$ of all monomials of degree $a$. Then the $p_i(Ax)$'s are a regular sequence if and only if not all $t\times t$ minors of $H$ are zero. But if they were zero, they would also be zero upon substitution of $A$ by $A^*$, for any $A^* \in \bar{k}^{m \times n}$; here $\bar{k}$ denotes the algebraic closure of $k$. However, this would contradict the fact that the $p_i(A^* x)$'s are a regular sequence for a generic choice of $A^* \in \bar{k}^{m \times n}$, as per Lemma 4 in \cite{Tsakiris-TIT2020}; here we have used the fact that when the $p_i(A^* x)$'s are a regular sequence of $\bar{k}[x]$, the CM-regularity of the ideal they generate is still $a$, while the $\bar{k}$-vector space dimension of the degree-$a$ graded component of $\bar{k}[x]$ is still equal to $t$, so that the $p_i(A^* x)$'s are a regular sequence if and only if not all $t \times t$ minors of $H|_{A^*}$ are zero, where $H|_{A^*}$ is obtained from $H$ by replacing $A$ by $A^*$.
\end{proof}

Our next ingredient is fundamental as well and interesting in its own right.

\begin{lemma} \label{lem:n!}
$R=k(A)[x]$ is a free $k(A)[p_1(Ax),\dots,p_n(Ax)]$-module of rank $n!$.
\end{lemma}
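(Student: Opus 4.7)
The plan is to combine the regular sequence property from Lemma~\ref{lem:regular sequence} with a Hilbert-series comparison. First, I would note that since $p_1(Ax), \ldots, p_n(Ax)$ form a regular sequence of length $n = \dim R$ of homogeneous elements of positive degree in the Cohen--Macaulay graded ring $R$, they constitute a homogeneous system of parameters. Hence the inclusion $B := k(A)[p_1(Ax), \ldots, p_n(Ax)] \hookrightarrow R$ is a finite graded extension, and dimension counting forces the $p_i(Ax)$'s to be algebraically independent over $k(A)$. In particular $B$ is itself a graded polynomial ring with $\deg p_i(Ax) = i$, and $R$ is a finitely generated graded $B$-module.

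Second, I would apply the Koszul complex of the regular sequence $p_1(Ax), \ldots, p_n(Ax)$ in $R$ to compute
\begin{equation*}
  H_{R/J}(t) \;=\; \frac{\prod_{i=1}^n (1-t^i)}{(1-t)^n} \;=\; \prod_{i=1}^n \bigl(1 + t + \cdots + t^{i-1}\bigr),
\end{equation*}
where $J = (p_1(Ax), \ldots, p_n(Ax))R$; in particular $\dim_{k(A)} R/J = n!$. Lifting a homogeneous $k(A)$-basis of $R/J$ to homogeneous elements $f_1, \ldots, f_{n!} \in R$, the graded Nakayama lemma (applied to $R$ as a graded $B$-module with respect to the irrelevant ideal of $B$) yields a graded surjection $\Phi : N \twoheadrightarrow R$, where $N := \bigoplus_{i=1}^{n!} B(-\deg f_i)$ is the free graded $B$-module of rank $n!$ with generators matching the degrees of the $f_i$'s.

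Finally, to upgrade generation to freeness I would compare Hilbert series: using $H_B(t) = 1/\prod_{i=1}^n (1-t^i)$ and the fact that the $f_i$'s form a basis of $R/J$,
\begin{equation*}
  H_N(t) \;=\; \frac{\sum_{i=1}^{n!} t^{\deg f_i}}{\prod_{i=1}^n (1-t^i)} \;=\; \frac{H_{R/J}(t)}{\prod_{i=1}^n (1-t^i)} \;=\; \frac{1}{(1-t)^n} \;=\; H_R(t),
\end{equation*}
so the graded surjection $\Phi$ must be an isomorphism. I do not anticipate a serious obstacle beyond Lemma~\ref{lem:regular sequence} itself; the rest is essentially formal, the only point deserving minor care being that the grading on $B$ (with $p_i(Ax)$ in weight $i$) is compatible with the standard grading on $R$ by total degree in the $x_j$'s, which holds because each $p_i(Ax)$ is $x$-homogeneous of degree $i$. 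A conceptual alternative would be to invoke graded miracle flatness (or, equivalently, Auslander--Buchsbaum, giving $\mathrm{pd}_B(R) = \mathrm{depth}(B) - \mathrm{depth}_B(R) = n - n = 0$) for the finite graded map $B \hookrightarrow R$ between Cohen--Macaulay algebras of dimension $n$ with $B$ regular, reading off the rank from $\dim_{k(A)} R/J = n!$.
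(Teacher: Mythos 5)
Your proof is correct, and it takes a genuinely different route from the paper's. The paper first establishes flatness of $R$ over $R_n=k(A)[p_1(Ax),\dots,p_n(Ax)]$ (via the graded criterion in Matsumura together with Hartshorne's theorem on regular sequences), then proves finite generation by a degree-induction argument, deduces projectivity from ``flat and finitely presented,'' invokes the graded fact that finitely generated projective modules over a polynomial ring are free, and finally needs a separate argument to pin the rank down to $n!$ by showing that homogeneous free generators descend to a $k(A)$-basis of $R/J$. You bypass the flatness/projectivity machinery entirely: after noting that the regular sequence of Lemma~\ref{lem:regular sequence} is a homogeneous system of parameters (so $B$ is a polynomial ring and $R$ a finite graded $B$-module), graded Nakayama gives a degree-zero surjection from a free graded $B$-module modeled on a homogeneous basis of $R/J$, and the Hilbert-series identity $H_N(t)=H_{R/J}(t)/\prod_i(1-t^i)=1/(1-t)^n=H_R(t)$ forces this surjection to be an isomorphism degree by degree; the rank $n!$ then comes for free from $\dim_{k(A)}R/J=n!$, which both proofs extract from the Koszul resolution. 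Your approach is more self-contained, produces the freeness and the rank in one stroke, and immediately yields the generalization stated in the paper's subsequent remark (rank $d_1\cdots d_n$ for a homogeneous regular sequence of degrees $d_1,\dots,d_n$); the Auslander--Buchsbaum/miracle-flatness alternative you sketch at the end is essentially a compressed version of the paper's flatness-then-freeness route. The only points needing the minor care you already flag -- compatibility of the grading on $B$ with that of $R$, and finiteness of $R$ over $B$ so that the graded Nakayama and Hilbert-series bookkeeping are legitimate -- are indeed unproblematic.
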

\begin{proof}
We first prove that $R$ is flat over $R_n = k(A)[p_1(Ax),\dots,p_n(Ax)]$.
Quite generally, the inclusion of $R_n$ to $R$ is a homomorphism of positively graded rings, which takes the maximal homogeneous ideal of $R_n$ into the maximal homogeneous ideal $\fm$ of $R$. By part (3) of the Remark at page 178 in \cite{matsumura1989commutative}, $R$ is flat over $R_n$ if and only if $R_\fm$ is flat over $R_n$. Now, by Lemma \ref{lem:regular sequence} $p_1(Ax),\dots,p_n(Ax)$ is a regular sequence of $R$; as such it remains a regular sequence in $R_\fm$. Consequently, Theorem 1 in \cite{hartshorne1966property} gives that $R_\fm$ is flat over $R_n$; we conclude that $R$ is flat over $R_n$.

Next, we argue that $R$ is finitely generated over $R_n$. Let $J$ be the ideal of $R$ generated by $p_i(Ax), \, i \in [n]$, which is a regular sequence by Lemma \ref{lem:regular sequence}. It follows that $R/J$ is Artinian and of $k(A)$-vector space dimension $n!$; for the latter statement we used the known form for the Hilbert series of the quotient of $R$ by a regular sequence of homogeneous elements. Let $\cB$ be a $k(A)$-vector space basis of $R/J$. If $f \in R$ is a homogeneous polynomial of degree $d$, then we can write $f = \sum_{b \in \cB} c_b \, b + f_1$, where $c_b \in k(A)$ and $f_1$ is a homogeneous polynomial of $J$ of degree $d$. Writing $f_1 = \sum_{i \in [n]} g_i p_i(Ax)$, where $g_i$ is homogeneous of degree $d-i$, we have $f = \sum_{b \in \cB} c_b \, b + \sum_{i \in [n]} g_i p_i(Ax)$. Writing each $g_i$ as a $k(A)$-linear combination of the $b$'s plus a homogeneous polynomial of degree $d-i$ in $J$, we inductively see that $R$ is finitely generated over $R_n$ by $\cB$.

We now argue that $R$ is free over $R_n$ of finite rank. As $R$ is a finitely generated $R_n$-module and $R_n$ is a polynomial ring, $R$ is finitely presented over $R_n$; since $R$ is flat over $R_n$, it follows by the Corollary at page 53 in \cite{matsumura1989commutative} that $R$ is projective over $R_n$. Now, $R$ is a finitely generated graded module over the polynomial ring $R_n$, and so by Theorem 1.5.15(d) in \cite{bruns1998cohen} it is projective over $R_n$ if and only if it is free over $R_n$; we conclude that $R$ is a graded free $R_n$-module of finite rank.

Finally, we argue that the rank of $R$ as a free module over $R_n$ is $n!$. Let $\cU$ be homogeneous free generators of $R$ over $R_n$; it is clear that $\cU$ spans the $k(A)$-vector space $R/J$. Let us show that $\cU$ is linearly independent over $k(A)$ in $R/J$. Suppose $\sum_{u \in \cU} c_u \, u \in J$ for $c_u \in k(A)$. Since $J$ is a homogeneous ideal, we may assume that all $u$'s in the summation for which $c_u \neq 0$ have the same degree $d$. Then $\sum_{u \in \cU} c_u \, u + \sum_{i \in [n]} f_i p_i(Ax) = 0$ for some homogeneous $f_i \in R$ of degree $d-i$. Since $\cU$ generates $R$ over $R_n$, there are expressions $f_i = \sum_{u \in \cU} \rho_{i,u} \, u$, where $\rho_{i,u} \in R_n$. Substituting, we get a relation
\begin{equation*}
    \sum_{u \in \cU} \left(c_u  + \sum_{i \in [n]} \rho_{i,u} p_i(Ax) \right) u = 0.
\end{equation*}
\noindent Since the $u$'s are free over $R_n$ we must have $c_u  + \sum_{i \in [n]} \rho_{i,u} p_i(Ax) = 0$. The left-hand-side is a polynomial in $R_n$ whose constant term is $c_u$; it follows that $c_u = 0$.
\end{proof}

\begin{remark}
The proof of Lemma \ref{lem:n!} directly generalizes to any regular sequence $h_1,\dots,h_n$ of $R$ consisting of homogeneous elements of degrees $d_1,\dots,d_n$; this yields that $R$ is a free $k(A)[h_1,\dots,h_n]$-module of rank $d_1\cdots d_n$.
\end{remark}

We can now prove Proposition \ref{prp:F_n to F_m}.

\begin{proof}(Proposition \ref{prp:F_n to F_m})
For convenience we set $R_n = k(A)[p_1(Ax),\dots,p_n(Ax)]$. We have an inclusion of integral domains $R_n \subset R$, whose fields of fractions we denote respectively by $K(R_n)$ and $K(R)$. Now $K(R_n)$ is just a localization of $R_n$ and so it is flat over $R_n$. We thus have an inclusion of rings $K(R_n) \subset R \otimes_{R_n} K(R_n) \subset K(R)$. By Lemma \ref{lem:n!}, $R$ is a finitely generated $R_n$-module and so the ring extension $R_n \subset R$ is integral. It follows that the ring extension $K(R_n) \subset R \otimes_{R_n} K(R_n)$ is also integral; the Krull dimensions of both rings must be equal, whence $R \otimes_{R_n} K(R_n)$ is Artinian because $K(R_n)$ is a field. Quite generally, an Artinian ring contained in an integral domain must be a field; this gives that $R \otimes_{R_n} K(R_n)$ is a field. This field contains $R$ and, since it is contained in $K(R)$, it must be that $R \otimes_{R_n} K(R_n) = K(R)$. As $R$ is a free $R_n$-module of rank $n!$ by Lemma \ref{lem:n!}, it follows that $K(R)$ is a $K(R_n)$-vector space of dimension $n!$. Finally, recall that the fields $K(R_n)$ and $K(R)$ are just the fields $\sF_n$ and $\sF_m$, respectively.
\end{proof}

\section{The Field Extension $\sF_{n+1} \subseteq \sF_m$}

In this section we will prove that the field $\sF_{n+1} = k(A)(p_1(Ax),\dots,p_{n+1}(Ax))$ coincides with the field $\sF_m = \sE$; Theorem \ref{thm:fields} will follow as a special case for $k= \RR$.

We have fields $\sF_n \subseteq \sF_{n+1} \subseteq \sF_m$. By Proposition \ref{prp:F_n to F_m} the degree of the field extension $\sF_{n} \subset \sF_m$ is $n!$; hence to prove $\sF_{n+1} = \sF_m$ it suffices to prove that the degree of the field extension $\sF_n \subseteq \sF_{n+1}$ is $n!$. Note that $\sF_{n+1}$ is just the field $\sF_n(p_{n+1}(Ax))$ generated over $\sF_n$ by $p_{n+1}(Ax)$; thus it suffices to prove that the minimal polynomial $\mu_{n+1} \in \sF_n[t]$ of $p_{n+1}(Ax)$ over $\sF_n$ has degree $n!$. We will achieve this by using multidimensional resultants \cite{Macaulay_1916, Waerden_1950, Gelfand_Kapranov_Zelevinsky_1994,lang2012algebra}, which for the convenience of the reader we now briefly review following \cite{Macaulay_1916}.

For a positive integer $a$ and a field $\sK$ of characteristic zero, let $l_1,\dots,l_a$ be positive integers and denote by $\cM(t,l_i)$ the set of all monomials of degree $l_i$ in the variables $t=t_1,\dots,t_a$. For every $i \in [a]$ and every monomial $w \in \cM(t,l_i)$ we consider a variable $c(i,w)$ and define $f_i = \sum_{w \in \cM(t,l_i)} \, c(i,w) \, w$; this can be viewed as a homogeneous  polynomial of degree $l_i$ in the variables $t$ with coefficients in the polynomial ring $C=\sK[c(i,w): \, w \in \cM(t,l_i), \, i \in [a]]$. Set $l = l_1+\cdots+l_a-a+1$ and let $H$ be the matrix with entries in the ring $C$ defined as follows: with $w \in \cM(t,l-l_i)$ and $i \in [m]$ fixed, we consider the column-vector that gives the coefficients in $C$ of $w f_i$ with respect to $\cM(t,l)$; then $H$ has as its columns all such vectors as $i$ and $w$ range in $[a]$ and $\cM(t,l-l_i)$ respectively. Macaulay defined the resultant $\cR(f_1,\dots,f_a)$ of $f_1,\dots,f_a$ as the greatest common divisor of all maximal minors of $H$; he proved that it has the following properties that we shall need:

\begin{proposition}[\cite{Macaulay_1916}, \S6-\S10, Chapter I] \label{prp:Macaulay}
Set $L = l_1\cdots l_a$ and $L_i  = L / l_i$. Then
\begin{enumerate}
\item For $i \in [a]$ the degree of $c(i,t_i^{l_i})$ in $\cR(f_1,\dots,f_a)$ is $L_i$ and the coefficient of $c(a,t_a^{l_a})^{L_a}$ is $\cR(\bar{f}_1,\dots,\bar{f}_{a-1})^{l_a}$, where $\bar{f}_i = f_i|_{t_a=0}$.
\item For every $i \in [a]$, we have that $\cR(f_1,\dots,f_a)$ is homogeneous in the variables $(c(i,w))_{w\in\cM(t,l_i)}$ of degree $L_i$.
\item Let $f_1^*,\dots,f_a^* \in \sK[t]=\sK[t_1,\dots,t_a]$ be any specialization of $f_1,\dots,f_a$, obtained by replacing each $c(i,w)$ by an element of $\sK$, and $\cR(f_1^*,\dots,f_a^*)$ the corresponding specialization of $\cR(f_1,\dots,f_a)$. Then $\cR(f_1^*,\dots,f_a^*) = 0$ if and only if $f_1^*,\dots,f_a^*$ have a common root in $\bar{\sK}^a$ besides zero; here $\bar{\sK}$ is the algebraic closure of $\sK$.
\end{enumerate}
\end{proposition}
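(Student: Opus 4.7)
I would prove the three parts in the order (3), (2), (1), since the geometric vanishing criterion of part (3) is the cornerstone, and the degree statements in (1)--(2) can then be read off by a careful analysis of the matrix $H$. The key observation about $H$ is that the column indexed by a pair $(i, w)$ with $w \in \cM(t, l-l_i)$ records the coefficients of $w f_i$ in the monomial basis $\cM(t, l)$; thus its entries are $\sK$-linear in the variables $\{c(i, w'): w' \in \cM(t, l_i)\}$ and constant in the other $c$-blocks. Note that, generically, the number of columns of $H$ is at least its number of rows $|\cM(t,l)|$, which is precisely the numerical reason for taking \emph{maximal} minors.

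\textbf{Part (3).} The ``if'' direction is direct: a common nonzero root $\alpha \in \bar{\sK}^a$ of $f_1^*,\dots,f_a^*$ yields a nonzero linear functional $\mathrm{ev}_\alpha$ on the space of degree-$l$ polynomials that kills every column of the specialized matrix $H|_*$; hence $H|_*$ has row rank strictly less than $|\cM(t,l)|$ and every maximal minor vanishes, so $\cR(f_1^*,\dots,f_a^*) = 0$. The ``only if'' direction is projective Nullstellensatz: absence of a common nonzero root forces the ideal $(f_1^*,\dots,f_a^*) \subset \bar\sK[t]$ to contain all sufficiently high-degree monomials, and the exponent $l = \sum_i l_i - a + 1$ is precisely the Macaulay regularity bound ensuring that already $\cM(t,l) \subset (f_1^*,\dots,f_a^*)$. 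This means every row of $H|_*$ is in the image of the specialization map, so the rows span $\bar\sK^{|\cM(t,l)|}$, some maximal minor is nonzero, and $\cR(f_1^*,\dots,f_a^*) \ne 0$.

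\textbf{Parts (2) and (1).} Homogeneity of each maximal minor of $H$ in the $(i,\cdot)$-block of $c$-variables of degree equal to the number of columns from that block, namely $|\cM(t, l-l_i)|$, is immediate from the column description of $H$; taking the gcd over all maximal minors should then cut this raw degree down to the Bezout number $L_i = L/l_i$. I would verify this by computing the determinant of the Koszul complex of the generic regular sequence $f_1,\dots,f_a$ in $C[t]$ as a complex of free $C$-modules, which simultaneously yields homogeneity of degree $L_i$ in the $i$-th block and the Bezout-type product formulas. For the coefficient formula in part (1): setting $c(a, w) = 0$ for $w \ne t_a^{l_a}$ reduces $f_a$ to the monomial $c(a, t_a^{l_a})\, t_a^{l_a}$, and then restricting to $t_a = 0$ decomposes (up to reordering of rows and columns) the relevant part of $H$ into $l_a$ copies of the Macaulay matrix of $\bar f_1, \dots, \bar f_{a-1}$ --- one for each power of $t_a$ between $0$ and $l_a - 1$ --- producing the factor $\cR(\bar f_1, \dots, \bar f_{a-1})^{l_a}$.

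\textbf{Main obstacle.} The most delicate step is the descent from the raw multidegree $|\cM(t, l - l_i)|$ of an arbitrary maximal minor to the clean multidegree $L_i$ of $\cR$ itself: Macaulay's original treatment isolates an \emph{extraneous} factor by a meticulous partition of $\cM(t,l)$ into reduced and non-reduced monomials, and the combinatorics of that partition are the technical heart of the original proof. The Koszul-complex substitute avoids this bookkeeping but requires a careful verification that the complex is exact at the generic level and that the determinant of the complex reproduces precisely the gcd of maximal minors used to define $\cR$; it is at this compatibility check that I would expect to spend the bulk of the effort.
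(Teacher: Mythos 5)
First, a point of comparison: the paper does not prove this proposition at all --- it is quoted, with citation, from Macaulay's 1916 tract (\S6--\S10, Chapter I), and the paper only uses it as a black box. So the relevant question is whether your sketch would stand on its own as a proof of these classical facts; as written it would not, for two concrete reasons. The most serious one is in part (3), ``if'' direction: you argue that a common nonzero root $\alpha$ kills every column of the specialized matrix $H|_*$, hence every maximal minor of $H|_*$ vanishes, hence $\cR(f_1^*,\dots,f_a^*)=0$. The last inference is invalid, because specialization does not commute with taking gcd's: $\cR$ is the gcd of the maximal minors of the \emph{generic} matrix $H$, so the vanishing of every specialized minor only says that $\cR(f^*)\cdot(D/\cR)(f^*)=0$ for each maximal minor $D$, and it is a priori possible that all cofactors $D/\cR$ vanish at the specialization while $\cR$ does not (the gcd of $xy$ and $xz$ is $x$, and both minors vanish at $x=1$, $y=z=0$). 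Showing that the gcd itself cuts out exactly the locus of coefficient vectors with a nontrivial common zero is precisely the hard content of Macaulay's \S6--\S10 (via inertia forms and the quotient-of-two-determinants formula; in modern language, via the irreducibility of the closed image of the incidence variety under projective elimination), and your sketch has no substitute for it. In the ``only if'' direction you also silently invoke that absence of a nontrivial common zero forces the degree-$l$ part of $(f_1^*,\dots,f_a^*)$ to be all of $[\fm]_l$ at $l=\sum_i l_i-a+1$; this is true (the $f_i^*$ are then a regular sequence and the socle degree of the complete intersection is $l-1$), but it is an assertion that itself needs the Koszul/Hilbert-series argument, not a tautology.

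For parts (1) and (2), your own text concedes the gap: the descent from the raw block-degree $|\cM(t,l-l_i)|$ of an individual maximal minor to the Bezout degree $L_i$ of the gcd, and the compatibility of the determinant-of-the-Koszul-complex construction with Macaulay's gcd definition, are exactly the statements to be proved, and they are deferred rather than carried out (the $t_a=0$ block-decomposition heuristic for the leading coefficient in part (1) likewise presupposes the degree statement it is meant to support). So the proposal is a reasonable roadmap through the standard theory, but it is a plan with the two decisive steps missing rather than a proof; for the purposes of this paper, the appropriate move is what the authors do, namely cite Macaulay (or a modern source such as Gelfand--Kapranov--Zelevinsky) rather than reprove these properties.
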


\begin{remark} \label{rem:regular-resultant}
It is a basic observation that a set of $n$ homogeneous polynomials in a polynomial ring of dimension $n$ over $\sK$ are a regular sequence if and only if the ideal they generate is primary to the maximal homogeneous ideal, which is equivalent to the polynomials admitting no common root in $\bar{\sK}^n$ other than zero.  Indeed, the matrix $H$ that appeared in the proof of Lemma \ref{lem:regular sequence} is precisely Macaulay's matrix associated to the resultant of $p_1(Ax),\dots,p_n(Ax)$.
\end{remark}

We now return to our objective of showing that the minimal polynomial $\mu_{n+1}$ of $p_{n+1}(Ax)$ over $\sF_n$ has degree $n!$. We apply the formulation above with $a=n+1$, letting $t$ be the column vector with entries $t_1,\dots,t_n$, and introducing new variables $r_1,\dots,r_{n+1}$. For $i \in [n+1]$ we define $f_i^* = p_i(At)-r_i t_{n+1}^i$; this is a homogeneous polynomial of degree $i$ in the variables $ t_1,\dots,t_{n+1}$ with coefficients in the polynomial ring $k(A)[r]:=k(A)[r_1,\dots,r_{n+1}]$. We denote by $\cR(f_1^*,\dots,f_{n+1}^*) \in k(A)[r]$ the specialization of the resultant $\cR(f_1,\dots,f_m)$ of Proposition \ref{prp:Macaulay} with the variable $c(i,w)$ replaced by the corresponding coefficient of $w$ in $f_i^*$. Similarly, applying the formulation above with $a=n$, we let $\cR(p_1(At),\dots,p_n(At))$ be the specialization of $\cR(f_1,\dots,f_n)$ with the variable $c(i,w)$ replaced by the corresponding coefficient of $w$ in $p_i(At)$. It will be convenient to explicitly indicate the dependence of $\cR(f_1^*,\dots,f_{n+1}^*) \in k(A)[r]$ on the variables $A$ and $r=r_1,\dots,r_{n+1}$; for this we shall write $\rho(A,r_1,\dots,r_{n+1}):= \cR(f_1^*,\dots,f_{n+1}^*).$ We next proceed with a series of key observations.

\begin{lemma} \label{lem:coefficient}
$\rho(A,r_1,\dots,r_{n+1}) \in k(A)[r]$ is a non-zero polynomial of degree $n!$ in the variable $r_{n+1}$; moreover, the coefficient of $r_{n+1}^{n!}$ is $\cR(p_1(Ax),\dots,p_n(Ax))^{n+1}$.
\end{lemma}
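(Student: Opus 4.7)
The strategy is to derive both assertions directly from Proposition \ref{prp:Macaulay}(1), applied to the generic resultant $\cR(f_1,\dots,f_{n+1})$ with $a=n+1$ and $l_i=i$, and then track what happens under the specialization $c(i,w) \mapsto [w]f_i^*$ that produces $\rho(A,r)$.

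First I would record the relevant Macaulay parameters: with $l_i=i$ for $i\in[n+1]$, one has $L=1\cdot 2\cdots(n+1)=(n+1)!$ and hence $L_{n+1}=L/l_{n+1}=n!$. Part (1) of Proposition \ref{prp:Macaulay} therefore gives that the generic resultant $\cR(f_1,\dots,f_{n+1})$ has degree exactly $n!$ in the indeterminate $c(n+1,t_{n+1}^{n+1})$, and that the coefficient of $c(n+1,t_{n+1}^{n+1})^{n!}$ in it is $\cR(\bar f_1,\dots,\bar f_n)^{n+1}$, where $\bar f_i=f_i|_{t_{n+1}=0}$.

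Next I would analyze the specialization. Since $p_i(At)$ does not involve $t_{n+1}$, and the summand $-r_i t_{n+1}^i$ of $f_i^*$ contributes $-r_i$ only to the coefficient $c(i,t_{n+1}^i)$, the variable $r_{n+1}$ appears among the specialized coefficients only as the image of $c(n+1,t_{n+1}^{n+1})$, namely as $-r_{n+1}$. Consequently, viewed as a polynomial in $r_{n+1}$, $\rho(A,r)$ has degree at most $n!$, and its degree-$n!$ coefficient equals $(-1)^{n!}$ times the specialization of $\cR(\bar f_1,\dots,\bar f_n)^{n+1}$. Under our specialization $\bar f_i=f_i^*|_{t_{n+1}=0}=p_i(At)$ for $i\in[n]$, so this specialization is exactly $\cR(p_1(At),\dots,p_n(At))^{n+1}$; after relabelling $t$ to $x$ this is $\cR(p_1(Ax),\dots,p_n(Ax))^{n+1}$, which up to the harmless sign $(-1)^{n!}$ is the asserted leading coefficient.

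It then remains to verify that this leading coefficient is nonzero, which will simultaneously show $\rho\neq 0$ and that $\deg_{r_{n+1}}\rho=n!$. Here I would invoke Lemma \ref{lem:regular sequence}, which says that $p_1(Ax),\dots,p_n(Ax)$ is a regular sequence of $k(A)[x]$, combined with Remark \ref{rem:regular-resultant}, which characterizes regular sequences of $n$ homogeneous polynomials in $n$ variables as those with nonzero Macaulay resultant; hence $\cR(p_1(Ax),\dots,p_n(Ax))\neq 0$, and so is its $(n+1)$-th power.

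The only delicate point is the passage from ``degree $n!$ in $c(n+1,t_{n+1}^{n+1})$'' to ``degree $n!$ in $r_{n+1}$'' under specialization; a priori one must worry that lower powers of $c(n+1,t_{n+1}^{n+1})$ might, through the other specialized coefficients, introduce higher powers of $r_{n+1}$, or that the leading coefficient might vanish on specialization. Both concerns are dispelled by the observation above that $r_{n+1}$ is introduced by a single coefficient variable together with the non-vanishing of $\cR(p_1(Ax),\dots,p_n(Ax))$. I expect this bookkeeping to be the only real step requiring care, everything else being a direct reading of Proposition \ref{prp:Macaulay}(1).
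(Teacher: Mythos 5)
Your proposal is correct and follows essentially the same route as the paper: part (1) of Proposition \ref{prp:Macaulay} applied with $l_i=i$ (so $L_{n+1}=n!$), compatibility of the resultant with the specialization defining $\rho$, and non-vanishing of $\cR(p_1(Ax),\dots,p_n(Ax))$ via Lemma \ref{lem:regular sequence} together with part (3) and Remark \ref{rem:regular-resultant}. Your extra bookkeeping --- that $r_{n+1}$ enters through the single coefficient $c(n+1,t_{n+1}^{n+1})\mapsto -r_{n+1}$, producing the harmless factor $(-1)^{n!}$ (which is $1$ for all $n\ge 2$) --- is a careful detail the paper's statement suppresses and does not affect any subsequent use of the lemma.
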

\begin{proof}
By Lemma \ref{lem:regular sequence} $p_1(Ax),\dots,p_n(Ax)$ is a regular sequence of $R = k(A)[x]$. Part (3) of Proposition \ref{prp:Macaulay} and Remark \ref{rem:regular-resultant} give that $\cR(p_1(Ax),\dots,p_n(Ax))$ is a non-zero element of $k(A)$. The statement now follows from part (1) of Proposition \ref{prp:Macaulay}, because the resultant commutes with specialization.
\end{proof}

\begin{lemma} \label{lem:rho-vanishing}
We have $\rho \left(A,p_1(Ax),\dots,p_{n+1}(Ax)\right) = 0$.
\end{lemma}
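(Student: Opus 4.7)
The plan is to apply part (3) of Proposition~\ref{prp:Macaulay} to the specialized polynomials $f_i^{**} := f_i^*|_{r_j = p_j(Ax)} = p_i(At) - p_i(Ax)\, t_{n+1}^i$ for $i \in [n+1]$, which live in $\mathcal{E}[t_1,\dots,t_{n+1}]$ where $\mathcal{E} = k(A,x)$. Since resultants commute with specialization, we have
\begin{equation*}
    \rho\bigl(A,p_1(Ax),\dots,p_{n+1}(Ax)\bigr) \;=\; \mathcal{R}(f_1^{**},\dots,f_{n+1}^{**}).
\end{equation*}
Thus it suffices to exhibit a common nonzero root of $f_1^{**},\dots,f_{n+1}^{**}$ in $\overline{\mathcal{E}}^{n+1}$, so that part (3) of Proposition~\ref{prp:Macaulay}, applied with base field $\mathcal{E}$, forces the resultant to vanish.

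The natural candidate is the point $(t_1,\dots,t_n,t_{n+1}) = (x_1,\dots,x_n,1) \in \mathcal{E}^{n+1}$, which lies in $\mathcal{E}^{n+1}$ since the $x_j$'s are already elements of $\mathcal{E}$. Substituting this point into $f_i^{**}$ gives
\begin{equation*}
    f_i^{**}(x_1,\dots,x_n,1) \;=\; p_i(Ax) - p_i(Ax)\cdot 1^i \;=\; 0
\end{equation*}
for every $i \in [n+1]$. Since the last coordinate equals $1$, this point is nonzero, so part (3) of Proposition~\ref{prp:Macaulay} yields $\mathcal{R}(f_1^{**},\dots,f_{n+1}^{**}) = 0$, as desired.

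There is essentially no obstacle; the entire argument is a one-line application of the defining property of the multivariate resultant once one notices that affinizing the generic tuple $(t_1,\dots,t_{n+1})$ via $t_{n+1}=1$, $t_j = x_j$ turns the homogenized equations $p_i(At) = r_i t_{n+1}^i$ into the tautological identities $p_i(Ax) = p_i(Ax)$. The only point to be mildly careful about is that the specialization is legitimate: the generic resultant $\mathcal{R}(f_1,\dots,f_{n+1})$ is a polynomial in the coefficient variables $c(i,w)$, and the two-stage substitution (first $c(i,w) \mapsto$ its value in $f_i^* \in k(A)[r][t]$, then $r_j \mapsto p_j(Ax) \in \mathcal{E}$) can be composed into a single specialization of the $c(i,w)$'s into $\mathcal{E}$, to which Proposition~\ref{prp:Macaulay}(3) applies directly.
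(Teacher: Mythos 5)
Your proof is correct and is essentially identical to the paper's own argument: both substitute $r_i \mapsto p_i(Ax)$, observe that $(x_1,\dots,x_n,1)$ is a nonzero common root of the specialized polynomials over $\sE$, and invoke the vanishing criterion of Proposition~\ref{prp:Macaulay}(3) together with compatibility of the resultant with specialization. (The paper cites ``part (4)'' of that proposition, which is a typo for part (3) --- the part you correctly use.)
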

\begin{proof}
After substituting $r_i$ with $p_i(Ax)$ in the polynomials $f_i^*$ for every $i \in [n+1]$, it is evident that the point $(x_1,\dots,x_n,1) \in \mathscr{E}^{n+1}$ is a common root; thus the resultant of these polynomials vanishes by part (4) of Proposition \ref{prp:Macaulay}.
\end{proof}

\begin{lemma} \label{lem:rho-non-vanishing}
We have that $\rho\left(A,p_1(Ax),\dots,p_{n}(Ax),r_{n+1} \right) \neq 0$.
\end{lemma}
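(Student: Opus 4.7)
The plan is to read off the conclusion essentially for free from Lemma \ref{lem:coefficient}. View $\rho(A,r_1,\dots,r_{n+1})$ as a polynomial in the single variable $r_{n+1}$ with coefficients in $k(A)[r_1,\dots,r_n]$, writing
\begin{equation*}
\rho(A,r_1,\dots,r_{n+1}) \;=\; \sum_{k=0}^{n!} \alpha_k(A,r_1,\dots,r_n)\, r_{n+1}^k.
\end{equation*}
By Lemma \ref{lem:coefficient}, the leading coefficient satisfies $\alpha_{n!} = \cR\bigl(p_1(Ax),\dots,p_n(Ax)\bigr)^{n+1}$, and this is an element of $k(A)$; in particular it does not involve $r_1,\dots,r_n$ at all. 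Moreover $\alpha_{n!}$ is nonzero, because Lemma \ref{lem:regular sequence} asserts that $p_1(Ax),\dots,p_n(Ax)$ is a regular sequence in $R=k(A)[x]$, and so by Remark \ref{rem:regular-resultant} (which is just part (3) of Proposition \ref{prp:Macaulay} combined with the observation that a regular sequence of $n$ homogeneous forms in an $n$-variable polynomial ring has no nontrivial common root over the algebraic closure) the resultant $\cR(p_1(Ax),\dots,p_n(Ax))$ is a nonzero element of $k(A)$.

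Next, I would substitute $r_i \mapsto p_i(Ax)$ for each $i \in [n]$ into the displayed expansion. Because $\alpha_{n!}$ is independent of $r_1,\dots,r_n$, it is untouched by this substitution, so the resulting element of $k(A)[x][r_{n+1}]$ still has $\alpha_{n!}\neq 0$ as its coefficient of $r_{n+1}^{n!}$. Hence $\rho\bigl(A,p_1(Ax),\dots,p_n(Ax),r_{n+1}\bigr)$ is a nonzero polynomial in $r_{n+1}$ of degree exactly $n!$, as required. There is no substantive obstacle here: the entire lemma is a formal consequence of Lemma \ref{lem:coefficient}, and all of the actual work — identifying the leading coefficient as a power of $\cR(p_1(Ax),\dots,p_n(Ax))$, and showing that the $p_i(Ax)$'s form a regular sequence — has already been done.
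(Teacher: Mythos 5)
Your proof is correct and matches the paper's: the paper disposes of this lemma with the single remark that it ``follows immediately from Lemma~\ref{lem:coefficient}'', and your write-up simply makes explicit the intended reason, namely that the coefficient of $r_{n+1}^{n!}$ is $\cR(p_1(Ax),\dots,p_n(Ax))^{n+1} \in k(A)$, which involves none of $r_1,\dots,r_n$ and is nonzero, so the substitution $r_i \mapsto p_i(Ax)$ for $i \in [n]$ cannot annihilate the top-degree term.
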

\begin{proof}
This follows immediately from Lemma \ref{lem:coefficient}.
\end{proof}

\begin{remark} \label{rem:irreducible}
As by Lemma \ref{lem:regular sequence} the polynomials $p_1(Ax),\dots,p_n(Ax)$ are a regular sequence of $k(A)[x]$, they are algebraically independent over $k(A)$. Hence the polynomials $p_1(Ax),\dots,p_n(Ax),r_{n+1}$ are algebraically independent over $k(A)$. 
Therefore, $\rho\left(A,p_1(Ax),\dots,p_{n}(Ax),r_{n+1} \right)$ is irreducible in $k(A)[p_1(Ax),\dots,p_{n}(Ax),r_{n+1}]$ if and only if $\rho\left(A,r_1,\dots,r_{n+1} \right)$ is irreducible in $k(A)[r_1,\dots,r_{n+1}]$.
\end{remark}

We are going to prove that $\rho\left(A,r_1,\dots,r_{n+1} \right)$ is irreducible in $k[A,r_1,\dots,r_{n+1}]$. We do this by first proving the special case $m = n+1$, where all the difficulty concentrates.

\begin{lemma} \label{lem:irreducible(n+1)}
Suppose $m=n+1$, then $\rho\left(A,r_1,\dots,r_{n+1} \right)$ is irreducible as a polynomial in $k[A,r_1,\dots,r_{n+1}]$.
\end{lemma}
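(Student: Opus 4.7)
The plan is to establish the irreducibility of $\rho$ via a geometric identification of $V(\rho)$ with an irreducible parametrically-defined subvariety. Consider the parametrization $\Phi \colon \mathbb{A}_k^{(n+1)n + n} \to \mathbb{A}_k^{(n+1)^2}$ given by $(A, x) \mapsto (A, p_1(Ax), \ldots, p_{n+1}(Ax))$, and let $Z := \overline{\Phi(\mathbb{A}_k^{(n+1)n + n})}$. By Lemma~\ref{lem:rho-vanishing}, $\rho$ vanishes on $Z$, so $Z \subseteq V(\rho)$. Proposition~\ref{prp:F_n to F_m} gives $[\sF_m : \sF_n] = n!$, hence $\Phi$ is generically finite onto its image and $\dim Z = (n+1)n + n = (n+1)^2 - 1$; thus $Z$ is an irreducible codimension-$1$ component of the hypersurface $V(\rho)$.

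Next I would show that $Z$ is the \emph{only} codimension-$1$ component of $V(\rho)$. By Proposition~\ref{prp:Macaulay}(3), a point $(A^*, r^*)$ lies in $V(\rho)$ precisely when $f_1^*|_{A^*, r^*}, \ldots, f_{n+1}^*|_{A^*, r^*}$ share a common nontrivial projective root $t^* \in \bar{k}^{n+1}$. When $t_{n+1}^* \ne 0$, dehomogenization yields a preimage of $(A^*, r^*)$ under $\Phi$, placing the point in $Z$. When $t_{n+1}^* = 0$, setting $u := (t_1^*, \ldots, t_n^*)$, the vanishing $p_i(A^* u) = 0$ for all $i \in [n+1]$ with $u \ne 0$ together with Newton's identities forces $A^* u = 0$; that is, $A^*$ lies in the rank-deficient locus of $(n+1) \times n$ matrices, which by the classical determinantal formula has codimension $2$ in $\mathbb{A}_k^{(n+1)n}$. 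Hence this second contribution to $V(\rho)$ has codimension at least $2$ in $\mathbb{A}_k^{(n+1)^2}$ and adds no codimension-$1$ component. Since every irreducible polynomial factor of $\rho$ defines a codimension-$1$ component of $V(\rho)$, we conclude $\rho = c \cdot q^a$ for some constant $c \in k^\times$, irreducible polynomial $q$ defining $Z$, and integer $a \ge 1$.

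The final task is to show $a = 1$. Over the algebraic closure of $k(A, r_1, \ldots, r_n)$, Lemma~\ref{lem:coefficient} and Vi\`ete's formulas yield the factorization
\[
\rho = \Delta^{n+1} \prod_{j=1}^{n!}\bigl(r_{n+1} - p_{n+1}(A x^{(j)})\bigr),
\]
where $\Delta = \cR(p_1(At), \ldots, p_n(At))$ and the $x^{(j)}$ are the $n!$ solutions of $p_i(Ax) = r_i$ for $i \in [n]$. It suffices to exhibit a single specialization $(A^*, r_1^*, \ldots, r_n^*)$ at which the $n!$ scalars $p_{n+1}(A^* x^{(j)})$ are pairwise distinct, since then $\rho(A^*, r_1^*, \ldots, r_n^*, r_{n+1})$ is a squarefree univariate polynomial in $r_{n+1}$, which is incompatible with $\rho = c \cdot q^a$ for $a \ge 2$ (as $q(A^*, r^*, r_{n+1})$ would otherwise be a squared factor of generically positive degree in $r_{n+1}$). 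A natural candidate is to take $A^*$ to be the $(n+1) \times n$ matrix whose first $n$ rows are the standard basis vectors and whose last row is $v^\top$ for a generic $v \in k^n$, so that $p_i(A^* x) = \sum_{j=1}^n x_j^i + (v^\top x)^i$; at $v = 0$ the solutions $x^{(j)}$ are permutations of a common tuple $\xi$ and all values of $p_{n+1}$ collapse to the symmetric $p_{n+1}(\xi)$, but the asymmetric perturbation $(v^\top x)^{n+1}$ separates them for generic $v$ since the linear forms $v \mapsto v^\top \sigma(\xi)$ over $\sigma \in S_n$ are generically distinct.

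The main obstacle lies in this last step: while the separation of the $n!$ values of $p_{n+1}$ is intuitive at the first-order level in $v$, rigorously tracking the deformation of the solutions $x^{(j)}$ as $v$ moves away from $0$ requires a careful implicit function argument near the symmetric point $v = 0$, and one must confirm that the first-order distinctness lifts to actual distinctness of the perturbed values $p_{n+1}(A^* x^{(j)})$.
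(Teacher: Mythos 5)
Your first two steps are correct, and they amount to a geometric repackaging of the paper's ``content'' argument: you show $V(\rho)\subseteq Z\cup W$, where $W$ is the locus of rank-deficient $A^*$ (via Newton's identities in the $t_{n+1}^*=0$ case) which has codimension $2$, while $Z$ is irreducible of codimension $1$; this correctly yields $\rho=c\,q^{a}$ with $q$ the irreducible equation of $Z$. The genuine gap is the final step $a=1$, which is where the real content of the lemma lies, and you have not proved it. The perturbation argument with $A^*=[I_n;v^\top]$ is only a sketch (as you acknowledge), and the conclusion it is supposed to deliver --- that the $n!$ values $p_{n+1}(A^*x^{(j)})$ are pairwise distinct for generic $v$ --- is not a routine technicality: the $p_{n+1}(Ax^{(j)})$ are precisely the conjugates of $p_{n+1}(Ax)$ over $\sF_n$ (each repeated $[\sF_m:\sF_n(p_{n+1}(Ax))]$ times, by separability in characteristic zero), so their generic pairwise distinctness is \emph{equivalent} to the statement $[\sF_n(p_{n+1}(Ax)):\sF_n]=n!$, i.e.\ to the degree fact the lemma is meant to encode. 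In addition, the factorization $\rho=\Delta^{n+1}\prod_j\bigl(r_{n+1}-p_{n+1}(Ax^{(j)})\bigr)$ is a Poisson-type product formula that does not follow from Proposition \ref{prp:Macaulay}(3) (which identifies the root set, not multiplicities), so even the reduction ``distinct values $\Rightarrow$ squarefree specialization $\Rightarrow a=1$'' needs further justification as written.

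The missing step can be closed without any perturbation or deformation analysis, using what is already proved: since $m=n+1$ we have $\sF_{n+1}=\sF_m$, so Proposition \ref{prp:F_n to F_m} gives $[\sF_n(p_{n+1}(Ax)):\sF_n]=[\sF_{n+1}:\sF_n]=n!$. Your $q$ vanishes at the generic point of $Z$, i.e.\ $q\bigl(A,p_1(Ax),\dots,p_n(Ax),p_{n+1}(Ax)\bigr)=0$, and by Remark \ref{rem:irreducible} together with Gauss's lemma, $q\bigl(A,p_1(Ax),\dots,p_n(Ax),r_{n+1}\bigr)$ is irreducible in $\sF_n[r_{n+1}]$, hence is the minimal polynomial of $p_{n+1}(Ax)$ up to a unit; therefore $\deg_{r_{n+1}}q=n!$, and comparing with $\deg_{r_{n+1}}\rho=n!$ from Lemma \ref{lem:coefficient} forces $a=1$. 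This is in substance the paper's route: it identifies $\rho\bigl(A,p_1(Ax),\dots,p_n(Ax),r_{n+1}\bigr)$ with the minimal polynomial (degree $n!$, root $p_{n+1}(Ax)$), deduces irreducibility over $k(A)(r_1,\dots,r_n)$, and then disposes of a possible content in $k[A]$ by the same two geometric ingredients you use (Newton's identities and the codimension-two determinantal locus). As submitted, your proof is incomplete at its decisive point.
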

\begin{proof}
When $m=n+1$ we have $\sF_m = \sF_{n+1}$ and the degree of the field extension $\sF_n \subseteq \sF_{n+1} $ is $n!$ by Proposition \ref{prp:F_n to F_m}. Since $\sF_{n+1} = \sF_n(p_{n+1}(Ax))$, we have that the minimal polynomial $\mu_{n+1}$ of $p_{n+1}(Ax)$ over $\sF_n$ is of degree $n!$. On the other hand, by Lemma \ref{lem:rho-non-vanishing} the polynomial $\rho\left(A,p_1(Ax),\dots,p_{n}(Ax),r_{n+1} \right) \in \sF_n[r_{n+1}]$ is a polynomial of degree $n!$, which by Lemma \ref{lem:rho-vanishing} has $p_{n+1}(Ax)$ as its root. Consequently, $\rho\left(A,p_1(Ax),\dots,p_{n}(Ax),r_{n+1} \right) \in \sF_n[r_{n+1}]$ is the minimal polynomial $\mu_{n+1}$ of $p_{n+1}(Ax)$ over $\sF_n$ up to multiplication by an element of $\sF_n$.
In view of Remark \ref{rem:irreducible}, we have that $\rho\left(A,r_1,\dots,r_{n+1} \right) \in k(A)(r_1,\dots,r_n)[r_{n+1}]$ is irreducible, where $k(A)(r_1,\dots,r_n)$ denotes the field of fractions of the polynomial ring $k(A)[r_1,\dots,r_n]$. By Gauss's lemma on irreducible polynomials, it suffices to prove that $\rho\left(A,r_1,\dots,r_{n+1} \right)$ is primitive in $k[A,r_1,\dots,r_n][r_{n+1}]$.

Any common factor of the coefficients of $\rho\left(A,r_1,\dots,r_{n+1} \right)$ must divide the coefficient of $r_{n+1}^{n!}$, which by Lemma \ref{lem:coefficient} is $\cR(p_1(Ax),\dots,p_n(Ax))^{n+1} \in k[A]$. Hence, it suffices to show that no non-constant polynomial $p(A) \in k[A]$ divides $\rho\left(A,r_1,\dots,r_{n+1} \right)$. Suppose otherwise, that is $p(A) \in k[A]$ is a non-constant factor of $\cR(A,r_1,\dots,r_{n+1})$ and let $A^* \in \bar{k}^{m \times n}$ be a root of $p(A)$. It follows that for any choice $(r_1^*,\dots,r_{n+1}^*) \in \bar{k}^{n+1}$ we have $\cR(A^*,r_1^*,\dots,r_{n+1}^*) = 0$, so that by
part (3) of Proposition \ref{prp:Macaulay} the polynomials $f_i^*(x) = p_i(A^*x)-r_i^* x_{n+1}^i \in \bar{k}[x_1,\dots,x_{n+1}], \, i \in [n+1]$ have a common root $0 \neq (\xi_1,\dots,\xi_{n+1}) \in \bar{k}^{n+1}$. Let us distinguish between the case $\rank(A^*) = n$ and $\rank(A^*) < n$.

Suppose that $\rank(A^*) = n$. If $\xi_{n+1}=0$, letting $\xi$ be the column vector $(\xi_1,\dots,\xi_n) \in \bar{k}^n$, we have that $A^*\xi \in \bar{k}^{m}$ is a common root of the polynomials $p_1(z),..,p_{m}(z) \in \bar{k}[z] = \bar{k}[z_1,\dots,z_{m}]$ (recall $m = n+1$). Since the $p_i$'s are a regular sequence in $\bar{k}[z]$, we must have that $A^* \xi = 0$. By hypothesis $\rank(A^*) = n$ so that $\xi = 0$; however, this contradicts our assumption that not all $\xi_1,\dots,\xi_{n+1}$ are zero. Hence, it must be that $\xi_{n+1} \neq 0$, and since the $f_i^*$'s are homogeneous, we may assume $\xi_{n+1} = 1$; in turn, this gives $r_i^* = p_i(A^*\xi)$ for every $i \in [n+1]$. As the $r_i^*$'s were chosen arbitrarily, the image of the polynomial map $\bar{k}^n \rightarrow \bar{k}^{n+1}$ that takes $\beta^* \in \bar{k}^n$ to $\left(p_1(A^*\beta^*),\dots,p_{n+1}(A^*\beta^*)\right) \in \bar{k}^{n+1}$ must be the entire $\bar{k}^{n+1}$. This implies that the affine coordinate ring of the closure of this map, which is $\bar{k}[p_1(A^*x),\dots,p_{n+1}(A^*x)]$, must have Krull dimension $n+1$. But this is impossible because this is a subring of $\bar{k}[x]=\bar{k}[x_1,\dots,x_n]$.

We have concluded that $\rank(A^*)<n$ for any root $A^*$ of $p(A)$. In other words, the hypersurface of $\bar{k}^{m \times n}$ defined by the polynomial $p(A)$ must lie in the determinantal variety defined by the ideal $I_{n}(A)$ of maximal minors of $A$. But this is impossible, because the dimension of the former is $(n+1)n-1=n^2+n-1$, while the dimension of the latter is well-known to be $(n-1)(n+2) = n^2+n-2$ \cite{bruns2006determinantal, Aldo-book}.
\end{proof}

We now treat the general case.

\begin{lemma} \label{lem:irreducible(m)}
$\rho\left(A,r_1,\dots,r_{n+1} \right)$ is irreducible in $k[A,r_1,\dots,r_{n+1}]$.
\end{lemma}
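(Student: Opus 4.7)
The plan is to reduce Lemma \ref{lem:irreducible(m)} to Lemma \ref{lem:irreducible(n+1)} by combining a geometric irreducibility argument for the hypersurface $V(\rho) \subseteq \bar{k}^{mn+n+1}$ with a specialization of $A$. First I would show that $V(\rho)$ is irreducible as a variety; since $V(\rho)$ is a hypersurface this forces $\rho = c\,\rho_1^{e}$ in $k[A, r_1, \ldots, r_{n+1}]$ with $c \in k^\times$, $\rho_1$ irreducible and $e \ge 1$. Second, I would set the last $m-n-1$ rows of $A$ to zero: under this specialization $\rho$ reduces (up to a unit in $k^\times$) to the irreducible polynomial $\rho_{n+1}(A', r)$, which forces $e=1$.

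For the irreducibility of $V(\rho)$, consider the parametrization
\begin{equation*}
\Phi : \bar{k}^{mn+n} \longrightarrow \bar{k}^{mn+n+1}, \qquad (A,\xi) \longmapsto \bigl(A, p_1(A\xi), \ldots, p_{n+1}(A\xi)\bigr).
\end{equation*}
By Lemma \ref{lem:rho-vanishing} its image is contained in $V(\rho)$, and by Lemma \ref{lem:n!} its generic fiber is finite (already the subsystem $p_i(A\xi)=r_i$, $i\in[n]$, has at most $n!$ solutions), so $\overline{\Phi(\bar{k}^{mn+n})}$ is irreducible of dimension $mn+n$, matching the dimension of $V(\rho)$. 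By part (3) of Proposition \ref{prp:Macaulay}, any point $(A,r) \in V(\rho) \setminus \mathrm{im}(\Phi)$ corresponds to a common root of the $f_i^*$ with $t_{n+1}=0$, i.e.\ to a matrix $A$ admitting a nonzero $t'\in\bar{k}^n$ with $p_i(At')=0$ for all $i\in[n+1]$. For fixed $t'\neq 0$, the substitution $u=At' \in \bar{k}^m$ identifies the locus of such $A$ with the pullback, along a linear surjection $\bar{k}^{mn}\to\bar{k}^m$, of the vanishing set of the regular sequence $p_1,\ldots,p_{n+1}$ in $k[u_1,\ldots,u_m]$ (regular since $m \ge n+1$); hence the incidence variety $\{(A,[t']) \in \bar{k}^{mn}\times \mathbb{P}^{n-1} : p_i(At')=0,\, i\in[n+1]\}$ has dimension $mn-2$, and the corresponding $(r$-unconstrained$)$ stratum of $V(\rho)$ has dimension at most $mn+n-1$. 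As a hypersurface $V(\rho)$ is equidimensional of dimension $mn+n$, so no further top-dimensional component is possible and $V(\rho)=\overline{\Phi(\bar{k}^{mn+n})}$ is irreducible; thus $\rho = c\,\rho_1^e$.

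For the specialization, write $A = A'\oplus 0$ with $A' \in \bar{k}^{(n+1)\times n}$ a matrix of variables, so that $p_i(At) = p_i(A't)$. Part (3) of Proposition \ref{prp:Macaulay} then gives that $\rho(A'\oplus 0, r)$ and $\rho_{n+1}(A', r)$ have the same zero locus in $\bar{k}^{(n+1)n+(n+1)}$. Both polynomials have degree $n!$ in $r_{n+1}$ with nonzero leading coefficient in $k[A']$ by Lemma \ref{lem:coefficient} and Lemma \ref{lem:regular sequence}. Since $\rho_{n+1}$ is irreducible in $k[A',r]$ by Lemma \ref{lem:irreducible(n+1)} and $k$ has characteristic zero, $\rho_{n+1}$ is squarefree in $\bar{k}[A',r]$; matching the irreducible factorizations over $\bar{k}[A',r]$ and comparing $r_{n+1}$-degrees then yields $\rho(A'\oplus 0, r) = u\,\rho_{n+1}(A', r)$ for some $u \in k^\times$. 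Substituting $\rho = c\,\rho_1^e$ gives $c\,(\rho_1(A'\oplus 0, r))^e = u\,\rho_{n+1}(A', r)$, and the squarefreeness of $\rho_{n+1}$ rules out $e \ge 2$, forcing $e=1$ and $\rho = c\,\rho_1$ irreducible in $k[A,r_1,\ldots,r_{n+1}]$. The main obstacle is the dimension estimate for the $t_{n+1}=0$ stratum: because $r$ is entirely unconstrained there, the argument must produce a codimension-at-least-two bound for the $A$-locus, which rests precisely on the regularity of the sequence $p_1,\ldots,p_{n+1}$ in $m$ variables after the change of variables $u=At'$.
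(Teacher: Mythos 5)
Your proposal is correct, but its central step is genuinely different from the paper's. The paper's proof is short and purely algebraic: assuming $\rho = gh$ with $g,h$ non-constant, it applies the specialization $\vartheta$ sending the rows of $A$ beyond the $(n+1)$-st to zero, invokes Lemma \ref{lem:irreducible(n+1)} to force (say) $\vartheta(g)=1$, deduces via the $r_{n+1}$-degree that $g$ divides the leading coefficient $\cR(p_1(Ax),\dots,p_n(Ax))^{n+1}\in k[A]$, and then uses homogeneity of that coefficient in $A$ (degree $n\cdot n!$) to conclude $g$ is constant. You instead first prove that the hypersurface $V(\rho)\subset\bar k^{mn+n+1}$ is irreducible by identifying it with the closure of the image of $(A,\xi)\mapsto(A,p_1(A\xi),\dots,p_{n+1}(A\xi))$, the decisive point being your dimension count showing that the $t_{n+1}=0$ locus (via the regular sequence $p_1,\dots,p_{n+1}$ in $\bar k[u_1,\dots,u_m]$, valid since $m\ge n+1$) has codimension at least two in the $A$-variables, so it cannot carry a component of the hypersurface; this gives $\rho=c\rho_1^e$, and you then use the same specialization to $m=n+1$ together with squarefreeness of the irreducible $\rho_{n+1}$ (characteristic zero) only to exclude $e\ge 2$. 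Both routes rest on Lemma \ref{lem:irreducible(n+1)} and on the specialization of $A$; yours buys a geometric byproduct — the set-theoretic identification of $V(\rho)$ with the closure of the parametrized image — at the cost of more machinery, while the paper's argument is leaner. Two small points to tighten: your ``same zero locus, hence equal up to a unit'' step is cleaner (and avoids a subtlety) if you simply note that the resultant commutes with specialization, so $\rho(A'\oplus 0,r)=\rho_{n+1}(A',r)$ exactly, as the paper does; with only the zero-locus argument you should also observe that the $\bar k$-irreducible factors of the $k$-irreducible $\rho_{n+1}$ are Galois conjugates, hence all of the same positive $r_{n+1}$-degree, before the degree comparison fixes the multiplicities. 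Likewise, the passage from irreducibility of $V(\rho)$ over $\bar k$ to $\rho=c\rho_1^e$ with $\rho_1\in k[A,r]$ irreducible silently uses that $k$ is perfect (a $k$-irreducible polynomial is squarefree over $\bar k$); this is routine in characteristic zero but deserves a sentence.
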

\begin{proof}
The case $m=n+1$ has been proved in Lemma \ref{lem:irreducible(n+1)}, hence we assume $m>n+1$. Let us denote by $\bar{A}$ the matrix obtained from $A$ by replacing all $a_{ij}$'s for which $i>n+1$ with zero; this induces a $k$-algebra homomorphism $\vartheta: k[A,r] \rightarrow k[\bar{A},r]$ which takes $A$ to $\bar{A}$. As the resultant commutes with specialization, $\vartheta\left(\rho\left(A,r_1,\dots,r_{n+1} \right) \right) = \rho\left(\bar{A},r_1,\dots,r_{n+1} \right)$. Now suppose $\rho\left(A,r_1,\dots,r_{n+1} \right) = g h$ with $g,h$ non-constant polynomials in $k[A,r]$; then $\rho\left(\bar{A},r_1,\dots,r_{n+1} \right) = \vartheta(g) \vartheta(h)$ with $\vartheta(g), \, \vartheta(h) \in k[\bar{A},r_1,\dots,r_{n+1}]$. By Lemma \ref{lem:irreducible(n+1)} $\rho\left(\bar{A},r_1,\dots,r_{n+1} \right)$ is irreducible in $k[\bar{A},r_1,\dots,r_{n+1}]$, so we may assume that $\vartheta(g) = 1$. It follows that the degree of $r_{n+1}$ in $\vartheta(h)$ is $n!$, whence the degree of $r_{n+1}$ in $h$ is $n!$ as well. In turn, this gives that $g$ divides the coefficient of $r_{n+1}^{n!}$ in $\rho\left(A,r_1,\dots,r_{n+1} \right)$, which by Lemma \ref{lem:coefficient} is $\cR(p_1(Ax),\dots,p_n(Ax))^{n+1} \in k[A]$. By part (2) of Proposition \ref{prp:Macaulay}, for every $i \in [n]$, we have that $\cR(p_1(Ax),\dots,p_n(Ax))$ is a homogeneous polynomial in the coefficients of $p_i(Ax)$ of degree $n! / i$; those coefficients are homogeneous polynomials themselves in the variables $A$ of degree $i$. We thus conclude that $\cR(p_1(Ax),\dots,p_n(Ax))$ is a homogeneous polynomial in the variables $A$ of degree $n n!$. Since $g$ divides $\cR(p_1(Ax),\dots,p_n(Ax))^{n+1}$ it must also be homogeneous, and the fact that $\vartheta(g) = 1$ shows that $g$ is indeed a constant polynomial.
\end{proof}

We have arrived at the following crucial fact:

\begin{lemma} \label{lem:minimal polynomial}
The minimal polynomial $\mu_{n+1}$ of $p_{n+1}(Ax)$ over $\sF_n$ is up to multiplication by an element of $\sF_n$ equal to $\rho(A,p_1(Ax),\dots,p_{n}(Ax),r_{n+1})$, and thus has degree $n!$.
\end{lemma}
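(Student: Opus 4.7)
The plan is to assemble this directly from the irreducibility result of Lemma \ref{lem:irreducible(m)} together with the vanishing and non-vanishing statements of Lemmas \ref{lem:rho-vanishing} and \ref{lem:rho-non-vanishing}, interpreting them in the right polynomial ring.

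First I would observe that by Lemma \ref{lem:rho-vanishing}, the polynomial $\rho(A, p_1(Ax), \dots, p_n(Ax), r_{n+1}) \in \sF_n[r_{n+1}]$ has $p_{n+1}(Ax)$ as a root, and by Lemma \ref{lem:rho-non-vanishing} this polynomial is non-zero. In particular, it must be divisible by the minimal polynomial $\mu_{n+1}$ of $p_{n+1}(Ax)$ over $\sF_n$. So the remaining task is to show that $\rho(A, p_1(Ax), \dots, p_n(Ax), r_{n+1})$ is itself irreducible in $\sF_n[r_{n+1}]$, for then it agrees with $\mu_{n+1}$ up to a scalar in $\sF_n^\times$.

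To establish irreducibility in $\sF_n[r_{n+1}]$, I would chain two applications of Gauss's lemma together with Remark \ref{rem:irreducible}. Starting from Lemma \ref{lem:irreducible(m)}, $\rho(A, r_1, \dots, r_{n+1})$ is irreducible in $k[A, r_1, \dots, r_{n+1}]$; Gauss's lemma upgrades this to irreducibility in $k(A)[r_1, \dots, r_{n+1}]$. Then by Remark \ref{rem:irreducible}, since $p_1(Ax), \dots, p_n(Ax)$ are algebraically independent over $k(A)$, irreducibility transfers to $\rho(A, p_1(Ax), \dots, p_n(Ax), r_{n+1})$ as an element of the polynomial ring $k(A)[p_1(Ax), \dots, p_n(Ax), r_{n+1}]$. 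A second application of Gauss's lemma, viewing this ring inside its fraction field in the variables $p_1(Ax), \dots, p_n(Ax)$ adjoined with $r_{n+1}$, gives irreducibility in $\sF_n[r_{n+1}] = k(A)(p_1(Ax), \dots, p_n(Ax))[r_{n+1}]$.

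Finally, the degree claim is immediate: by Lemma \ref{lem:coefficient}, the polynomial $\rho(A, r_1, \dots, r_{n+1})$ has degree $n!$ in $r_{n+1}$ with leading coefficient $\cR(p_1(Ax),\dots,p_n(Ax))^{n+1}$, which is a non-zero element of $k(A) \subseteq \sF_n$; hence after specializing $r_i \mapsto p_i(Ax)$ for $i \in [n]$ the polynomial still has degree $n!$ in $r_{n+1}$, and so $[\sF_{n+1} : \sF_n] = \deg(\mu_{n+1}) = n!$. The main obstacle in this chain---proving the irreducibility of $\rho$ in $k[A, r_1, \dots, r_{n+1}]$---has already been overcome in Lemma \ref{lem:irreducible(m)}, so the only delicate point in the present proof is making sure the two Gauss-lemma steps are correctly applied to the correct UFDs and that the specialization in Remark \ref{rem:irreducible} is genuinely an isomorphism of polynomial rings, which follows from the algebraic independence of $p_1(Ax), \dots, p_n(Ax)$ over $k(A)$ guaranteed by Lemma \ref{lem:regular sequence}.
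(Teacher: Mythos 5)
Your proposal is correct and follows essentially the same route as the paper: irreducibility from Lemma \ref{lem:irreducible(m)} transferred via Remark \ref{rem:irreducible} and Gauss's lemma into $\sF_n[r_{n+1}]$, combined with the root property (Lemma \ref{lem:rho-vanishing}) and the degree-$n!$ statement coming from Lemma \ref{lem:coefficient}. The only difference is cosmetic—you apply Gauss's lemma twice (first from $k[A,r]$ to $k(A)[r]$, then from $k(A)[p_1(Ax),\dots,p_n(Ax)][r_{n+1}]$ to $\sF_n[r_{n+1}]$) where the paper uses it once—so the argument matches the paper's proof in substance.
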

\begin{proof}
By Lemma \ref{lem:irreducible(m)} and Remark \ref{rem:irreducible} $\rho(A,p_1(Ax),\dots,p_{n}(Ax),r_{n+1})$ is irreducible as a polynomial in $k[A,p_1(Ax),\dots,p_n(Ax)][r_{n+1}]$. By Gauss's lemma on irreducible polynomials over a unique factorization domain, $\rho(A,p_1(Ax),\dots,p_{n}(Ax),r_{n+1})$ is also irreducible in $k(A)(p_1(Ax),\dots,p_n(Ax))[r_{n+1}]=\sF_n[r_{n+1}]$. By Lemma \ref{lem:coefficient} it has degree $n!$ in $r_{n+1}$ and by Lemma \ref{lem:rho-vanishing} it has $p_{n+1}(Ax)$ as its root.
\end{proof}

We can now state and prove the main technical theorem of this paper:

\begin{theorem} \label{thm:F(n+1) = F(m)}
We have an equality of fields $\sF_{n+1} = \sF_m$.
\end{theorem}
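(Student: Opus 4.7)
The plan is to use the tower of field extensions $\sF_n \subseteq \sF_{n+1} \subseteq \sF_m$ together with a comparison of degrees. By Proposition \ref{prp:F_n to F_m} we already know $[\sF_m : \sF_n] = n!$, so it suffices to prove $[\sF_{n+1} : \sF_n] = n!$, since then by multiplicativity of degrees in towers, $[\sF_m : \sF_{n+1}] \cdot [\sF_{n+1}:\sF_n] = n!$ forces $[\sF_m : \sF_{n+1}] = 1$, i.e.\ $\sF_{n+1} = \sF_m$.

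To compute $[\sF_{n+1} : \sF_n]$, I would observe that $\sF_{n+1} = \sF_n(p_{n+1}(Ax))$ by the very definition of $\sF_{n+1}$, so the degree of the extension equals the degree of the minimal polynomial $\mu_{n+1} \in \sF_n[t]$ of $p_{n+1}(Ax)$ over $\sF_n$. But this minimal polynomial has been pinned down completely by Lemma \ref{lem:minimal polynomial}: up to a nonzero scalar in $\sF_n$, it is $\rho(A,p_1(Ax),\dots,p_n(Ax),r_{n+1})$, and its degree in $r_{n+1}$ is $n!$ (by Lemma \ref{lem:coefficient}). Hence $[\sF_{n+1}:\sF_n] = n!$.

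Putting these two pieces together gives the required equality $\sF_{n+1} = \sF_m$. There is essentially no obstacle left at this stage: all the substantive work has already been done, namely (i) establishing the global degree $[\sF_m:\sF_n] = n!$ via the freeness statement of Lemma \ref{lem:n!}, and (ii) identifying the minimal polynomial of $p_{n+1}(Ax)$ through the resultant construction and the irreducibility argument of Lemmas \ref{lem:irreducible(n+1)} and \ref{lem:irreducible(m)}. The theorem is merely the synthesis of these two computations via the tower formula $[\sF_m:\sF_n] = [\sF_m:\sF_{n+1}]\,[\sF_{n+1}:\sF_n]$.
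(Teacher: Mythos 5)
Your proposal is correct and matches the paper's own proof: both invoke Proposition \ref{prp:F_n to F_m} for $[\sF_m:\sF_n]=n!$, Lemma \ref{lem:minimal polynomial} for $[\sF_{n+1}:\sF_n]=n!$, and conclude by multiplicativity of degrees in the tower $\sF_n\subseteq\sF_{n+1}\subseteq\sF_m$. Nothing further is needed.
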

\begin{proof}
We have $[\sF_m : \sF_n] = n!$ by Proposition \ref{prp:F_n to F_m} and $[\sF_{n+1} : \sF_n] = n!$ by Lemma  \ref{lem:minimal polynomial}; since $\sF_n \subseteq \sF_{n+1} \subseteq \sF_m$, we must have $\sF_{n+1} = \sF_m$.
\end{proof}

As a corollary to Theorem \ref{thm:F(n+1) = F(m)}, we can prove Theorem \ref{thm:main}.

\begin{proof}(Theorem \ref{thm:main})
We have $\sF_{n+1} = \sF_m$ by Theorem \ref{thm:F(n+1) = F(m)} and $\sF_m = \sE$ by Proposition \ref{prp:Fm=E}; that is $\sF_{n+1} = \sE$. Concretely,
$k(A)(p_1(Ax),\dots,p_{n+1}(Ax)) = k(A)(x).$ It immediately follows from this equality that each $x_i$ is a rational function over $k$ in $A,p_1(Ax),\dots,p_{n+1}(Ax)$. In particular, for every $i \in [n]$, there exist polynomials \begin{equation*}
    f_i\big(A,p_1(Ax),\dots,p_{n+1}(Ax)\big),g_i\big(A,p_1(Ax),\dots,p_{n+1}(Ax)\big) 
\end{equation*}
\noindent in $ k[A][p_1(Ax),\dots,p_{n+1}(Ax)]$ such that $x_i = f_i / g_i$; in fact, since this holds for every field $k$ of characteristic zero, one sees that $f_i, \, g_i \in \ZZ[A][p_1(Ax),\dots,p_n(Ax)].$

Now let $A^* \in \bar{k}^{m \times n}$ and $x^* \in \bar{k}^n$ be generic in the sense that none of the $g_i$'s evaluates to zero upon substitution of $A$ and $x$ with $A^*$ and $x^*$, respectively. Suppose that $\xi \in \bar{k}^n$ is a common root of the polynomials $q_i(x) = p_i(A^*x) - p_i(A^*x^*), \, i \in [n+1]$; that is $p_i(A^* \xi) = p_i(A^* x^*)$ for every $i \in [n+1]$. As a consequence,
\begin{equation*}
    0 \neq g_i\big(A^*,p_1(A^*x^*),\dots,p_{n+1}(A^*x^*)\big) = g_i\big(A^*,p_1(A^*\xi),\dots,p_{n+1}(A^*\xi)\big)
\end{equation*}
\noindent $\forall i \in [n]$, and thus the equality of rational functions $x_i = f_i / g_i$ gives an equality in $\bar{k}$

\begin{equation*}
    \xi_i =  \frac{f_i\big(A^*,p_1(A^*\xi),\dots,p_{n+1}(A^*\xi)\big)}{g_i\big(A^*,p_1(A^*\xi),\dots,p_{n+1}(A^*\xi)\big)} = \frac{f_i\big(A^*,p_1(A^*x^*),\dots,p_{n+1}(A^*x^*)\big)}{g_i\big(A^*,p_1(A^*x^*),\dots,p_{n+1}(A^*x^*)\big)}  = x_i^*
\end{equation*}

\noindent for every $i \in [n]$.
\end{proof}

\bibliographystyle{siamplain}
	\bibliography{DCHS,LiangBook,FTVUS23,MRP,LiangArticle, Liangzu,UPCAManolis}
\end{document}